\begin{document}
\newtheorem{thm}{Theorem}[section]
\newtheorem{cor}[thm]{Corollary}
\newtheorem{lem}[thm]{Lemma}
\newtheorem{prop}[thm]{Proposition}
\theoremstyle{definition}
\newtheorem{defn}[thm]{Definition}
\theoremstyle{Assertion}
\newtheorem{asser}[thm]{Assertion}
\theoremstyle{remark}
\newtheorem{rem}[thm]{Remark}
\numberwithin{equation}{section}
\newcommand{\norm}[1]{\left\Vert#1\right\Vert}
\newcommand{\abs}[1]{\left\vert#1\right\vert}
\newcommand{\set}[1]{\left\{#1\right\}}
\newcommand{\Real}{\mathbb R}
\newcommand{\eps}{\varepsilon}
\newcommand{\To}{\longrightarrow}
\newcommand{\BX}{\mathbf{B}(X)}
\newcommand{\A}{\mathcal{A}}
\newcommand{\ts}{\textstyle}
\newcommand{\tg}{\mbox{\rm tg}}
\newcommand{\ctg}{\mbox{\rm ctg}}
\newcommand{\atg}{\tg^{-1}}
\newcommand{\actg}{\ctg^{-1}}
\newcommand{\asin}{\sin^{-1}}
\newcommand{\acos}{\cos^{-1}}
\newcommand{\dps}{\displaystyle}
\newcommand{\fnz}{\footnotesize}
\newcommand{\D}{\displaystyle}
\newcommand{\DF}[2]{\frac{\D#1}{\D#2}}
\newcommand{\scp}{\scriptstyle}

\title[A model for traffic flow]
{A model for traffic flow on a road with variable widths$^*$}%
\author[Sheng and Zhang]{Wancheng Sheng \qquad Qinglong Zhang}
\thanks{$^*$Supported by NSFC 11371240 and 11771274.\\
\indent Email: mathwcsheng@shu.edu.cn (Wancheng Sheng), zhangqinglong@shu.edu.cn (Qinglong Zhang)}
\dedicatory{Department of Mathematics, Shanghai University,
Shanghai, 200444, P.R.China
}

\begin{abstract}
We propose a model describing the traffic flow on a road with variable widths in this paper. The model, which is modified the Aw-Rascle model, is not conservative because of the source term. We obtain the elementary waves of the new traffic flow model, including rarefaction waves, shock waves, contact discontinuities and stationary waves. The Riemann problems of the system for the traffic flow are solved and some numerical results are given, which are almost the same as the theoretical ones.

\vskip 3pt

\noindent%
{\sc Keywords.}~ Traffic flow, variable widths, Aw-Rascle model, nonconservative system, elementary waves, Riemann problem.

\vskip 3pt
\noindent%
{\sc MSC2010.} Primary: 35L60, 35L65, 35L67, 35R03; Secondary: 76L05, 76N10.
\end{abstract}

\maketitle

\section{Introduction}
The Aw-Rascle (AR) model of traffic flow is given by \cite{AwRascle2}
\begin{equation}\label{1.1}
\left\{
\begin{array}{l}
\rho_t+(\rho u)_{x}=0,\\
(\rho (u+p))_t+(\rho u(u+p))_x=0,\\
\end{array}
\right.
\end{equation}
where $\rho, u$ represent the density and the velocity  of the traffic vehicle respectively. Here $p$ can be viewed as velocity offset and takes the form $p=\rho^{\gamma}~(\gamma>0)$ is analogous with the adiabatic gas constant in gas dynamics. The AR model describes the traffic flow in a unidirectional road, so here we assume that $\rho(t,x)\geq 0$ and $u(t,x)\geq 0$ basically.

The AR model is one of the main traffic flow models. It is proposed to resolve the theoretical inconsistencies of second order models pointed out by \cite{Daganzo5} and has been independently derived by Zhang in \cite{Zhang20}. AR model is also the basis for the multilane traffic flow model \cite{{Garavello6},{Greenberg7}},  and the hybrid traffic flow model \cite{MoutariRascle12}.

Aw and Rascle \cite{AwRascle2} investigated the Riemann problem of \eqref{1.1}. In \cite{Sun17}, Sun studied the Riemann problem and the interactions of elementary waves for AR model.
Modern transportation system is quite complicated because of non-isotropic. For example, on the highway, there will be multiple lanes, expand lanes, merging lanes, three or more lanes merging into two lanes or less lanes, etc. The feature of these phenomena is that the width of the interface of the road changes. The traffic flow is in a variable cross-section road.
In 2003, LeFloch etal \cite{LeflochThanh11} solved the Riemann problem of isentropic flow in a variable cross-section duct
\begin{equation}\label{1.2}
\left\{
\begin{array}{l}
(a\rho)_t+(a \rho u)_{x}=0,\\
(a\rho u)_t+(a(\rho u^2+p))_x=pa_x,\\
a_t=0,
\end{array}
\right.
\end{equation}
where $\rho, u, p$ represent the density, the velocity and the pressure of the fluid respectively, $a(x)$ represents the cross-section of the duct, which is independent of time. Recently, Sheng and Zhang studied the interaction of elementary waves for \eqref{1.2} including the stationary waves in \cite{Sheng16}.

Inspired by the model of fluid in a variable cross-section duct, in this paper, we propose a model which describes the traffic flows on a road with variable widths in section 2.  We do characteristic analysis in section 3. We give the elementary waves of the model, including the rarefaction waves, shock waves, contact discontinuities and stationary waves. To ensure the uniqueness of the stationary waves, we propose a global entropy condition similar as in \cite{LeflochThanh11,Sheng16}. In section 4, by using the characteristic analysis and phase plane analysis methods, we solve the Riemann problem for the system of traffic flow on a road with variable widths constructively. Some numerical tests are given in the last section, which are almost the same as the Riemann solutions we construct.

\section{Model for traffic flow  on a road with variable widths}
Assuming that traffic flow is in a unidirectional road. Denote $a(x)>0$ as the width of the road, which is a variable depending on $x$, see Fig.1.1. Here $\theta$ is the angle describing the change of $a(x)$, which changes very slowly. $x_1$ and $x_2$ are any two locations on the road with $x_1<x_2$. We will establish the model by the conservation of mass and momentum respectively.

\unitlength 0.8mm 
\linethickness{0.4pt}
\ifx\plotpoint\undefined\newsavebox{\plotpoint}\fi 
\begin{picture}(160.5,62)(-10,11)
\footnotesize
\put(23,51.75){\line(1,0){38}}
\multiput(61,51.75)(.087899543,.033675799){438}{\line(1,0){.087899543}}
\put(99.5,66.5){\line(1,0){43.75}}
\put(23.25,26.75){\line(1,0){119.75}}
\put(60.75,51.75){\line(1,0){11}}
\put(69.2,51.7){$\theta$}
\put(96.5,47){\vector(1,-2){.07}}\multiput(90,62.5)(.03367876,-.08031088){193}{\line(0,-1){.08031088}}
\put(90,62.75){\vector(1,0){7.75}}
\put(90,62.75){\vector(0,-1){16.25}}
\put(99,46){$P$}
\put(100.25,61.75){$Pa_x$}
\put(46,15){Fig.1.1.The traffic flow in a road with different widths.}
\put(54.75,51.5){\line(0,-1){24.75}}
\put(47,38.5){$a(x)$}
\put(29.75,43){\vector(1,0){12.75}}
\put(30,34.5){\vector(1,0){12.5}}
\put(112.25,52.75){\vector(1,0){20.75}}
\put(112.25,39){\vector(1,0){21.5}}
\qbezier(65.75,53.5)(65.75,52.88)(65.75,51.75)
\put(97.18,62.43){\line(0,-1){.938}}
\put(97.05,60.55){\line(0,-1){.938}}
\put(96.93,58.68){\line(0,-1){.938}}
\put(96.8,56.8){\line(0,-1){.938}}
\put(96.68,54.93){\line(0,-1){.938}}
\put(96.55,53.05){\line(0,-1){.938}}
\put(96.43,51.18){\line(0,-1){.938}}
\put(96.3,49.3){\line(0,-1){.938}}
\put(96.18,47.43){\line(-1,0){.929}}
\put(94.32,47.36){\line(-1,0){.929}}
\put(92.47,47.29){\line(-1,0){.929}}
\put(90.61,47.22){\line(-1,0){.929}}
\put(85.5,65.5){\line(0,-1){4.5}}
\put(92.75,67.25){\line(0,-1){3.25}}
\put(86,66){$\Delta x$}
\put(63.25,26.65){\circle*{1}}
\put(105,26.65){\circle*{1}}
\put(63.75,23.25){$x_1$}
\put(105.5,23.25){$x_2$}
\end{picture}

\noindent {\bf The conservation of mass}

As in the Lighthill-Whitham-Richards (LWR) theory, the traffic flow is considered as a compressible fluid. We consider the mass conservation in [$x_1,x_2$] within the time period [$t_1,t_2$]. It follows that
\begin{equation}\label{1.3}
\begin{array}{l}
\displaystyle \int_{x_1}^{x_2}{a\rho(t_2,x)}dx-\int_{x_1}^{x_2}{a\rho(t_1,x)}dx=\int_{t_1}^{t_2}{a\rho u(t,x_1)}dt-\int_{t_1}^{t_2}{a\rho u(t,x_2)}dt,
\end{array}
\end{equation}
under the assumption that the variables are continuously differentiable, we have
\begin{equation}\label{1.4}
\begin{array}{l}
(a\rho)_t+(a \rho u)_{x}=0.
\end{array}
\end{equation}

\noindent {\bf The conservation of momentum}

In the AR traffic flow model \eqref{1.1}, the pressure $p=\rho^{\gamma}$ is regarded as the velocity offset, which can be called ``pseudo-velocity", i.e., the cars accelerate when the pressure from the front decreases, and vice versa. In fact, if we denote the convective derivative ${\rm d}/{\rm d}t:=\partial_t+u\partial_x$, the second equation of \eqref{1.1} can be rewritten as ${\rm d}(u+p)/{\rm d}t=0$, which expresses the fact that $u+p$ remains constant along the direction of the car moving.

We consider the momentum conservation in [$x_1,x_2$] within the time period [$t_1,t_2$]. We have that
\begin{equation}\label{1.5}
\begin{array}{l}
\displaystyle \underbrace{\int_{x_1}^{x_2}{a\rho (u+p)(t_2,x)}dx-\int_{x_1}^{x_2}{a\rho (u+p)(t_1,x)}dx}_A=\\[12pt]
\displaystyle \underbrace{\int_{t_1}^{t_2}{a\rho u(u+p)(t,x_1)}dt-\int_{t_1}^{t_2}{a\rho u(u+p)(t,x_2)}dt}_B+\underbrace{\int_{t_1}^{t_2}\int_{x_1}^{x_2}{\rho upa_x}dxdt}_C,
\end{array}
\end{equation}
where $A$ represents the change of the momentum during the time period [$t_1,t_2$] in [$x_1,x_2$]. $B$ is the change of momentum in the two locations $x_1$ and $x_2$ respectively during the period [$t_1,t_2$]. $C$ is the momentum offset which is obtained because of the changing width of the road. Here $\rho u$ is the mass of vehicles, $pa_x$ is the velocity offset, which is positive as the width increases $(a_x>0)$ and negative as the width decreases $(a_x<0)$.

For the derivation of $C$, we consider the infinitesimal $\Delta x$ in [$x_1,x_2$], see Fig.1.1. The pressure offset obtained in $\Delta x$ is
$$
\Delta p= p\cdot sin\theta \cdot\Delta x\approx p\cdot tan\theta \cdot\Delta x= p\cdot a_x \cdot\Delta x,
$$
which is reasonable when $\theta$ is small. Similarly, we use the pressure to represent the pseudo-velocity. Thus the momentum offset corresponding to $\Delta x$  within time $\Delta t$ is
\begin{equation}\label{1.6}
\displaystyle \Delta I =m\cdot  \Delta v=a\rho u \cdot  \Delta t \cdot \frac{\Delta p}{a} =a\rho u \cdot  \frac{p\cdot a_x \cdot\Delta x}a\Delta t=\rho u pa_x \Delta x \Delta t.
\end{equation}
Integrating \eqref{1.6} in $[t_1,t_2]\times[x_1,x_2]$, we have the expression of $C$.

Under the assumption that the variables are continuously differentiable, \eqref{1.5} is equivalent to
\begin{equation}\label{1.7}
\begin{array}{l}
(a\rho (u+p))_t+(a \rho u (u +p))_x=\rho u pa_x.
\end{array}
\end{equation}

Now we give our full model as
\begin{equation}\label{1.8}
\left\{
\begin{array}{l}
(a\rho)_t+(a \rho u)_{x}=0,\\[5pt]
(a\rho (u+p))_t+(a \rho u (u +p))_x=\rho u pa_x,\\[5pt]
a_t=0,
\end{array}
\right.
\end{equation}
where $\rho, u$ represent the density and the velocity  of the traffic vehicles as before, $p=\rho^{\gamma}$ is given as the velocity offset, $a(x)$ represents the width of the road.

The first two equations represent the conservation of mass and momentum respectively, which is explained above. Usually $a(x)$ is given as a prior,  here we view it as a variant which is independent of time,  which is the third equation of \eqref{1.8}. We see that \eqref{1.8} is a non-conservative system, the definition of weak solutions can't be derived as usual. For more details about the nonconservative systems, we refer to \cite{{IsaacsonTemple8},{LeflochThanh11}}.

\section{Characteristic analysis for system \eqref{1.8}}
\subsection{Preliminaries}
Denote $U=(u,\rho,a)$. Considering a smooth solution, system \eqref{1.8} can be rewritten as
\begin{equation}\label{2.1}
A(U)\partial_t U+B(U)\partial_x U=0,
\end{equation}
where
\begin{equation}\label{2.2}
A=\left(
\begin{array}{cccc}
 0 & a & 0  \\
a  & a\gamma \rho^{\gamma-1} &  0  \\
0 &0 & 1
\end{array}
 \right),
\quad  B=\left(
\begin{array}{cccc}
a\rho & au &   \rho u  \\
au & au \gamma\rho^{\gamma-1} &  -u\rho^{\gamma}  \\
0 &0 & 0
\end{array}
 \right).
\end{equation}
The equation \eqref{2.1} has three eigenvalues
\begin{equation}\label{2.3}
\lambda_1=u-\gamma \rho^{\gamma},\ \ \ \lambda_2=u,\ \ \ \lambda_3=0.
\end{equation}
 The corresponding right eigenvectors are
$$
\vec{r}_1=(-\gamma \rho^{\gamma-1},1,0)^T,~~\vec{r}_2=(0,1,0)^T,\\
\vec{r}_3=\left(u(1+\gamma)\rho^{\gamma},-(\rho u+\rho^{\gamma+1}),
a(u-\gamma\rho^{\gamma}) \right)^T.
$$
The 2- and 3-characteristic fields are linearly degenerate, while the 1-characteristic fields are genuinely nonlinear. System \eqref{2.1} is non-strictly hyperbolic because $\lambda_1,\lambda_2$  may coincide with $\lambda_3$. More precisely, setting
$$
\Gamma_+:u=\gamma \rho^{\gamma},\quad\Gamma_0: u=0,
$$
we have that
$$
\lambda_1=\lambda_3\quad{\rm on} \quad\Gamma_{+},\quad
\lambda_2=\lambda_3\quad{\rm on}\quad\Gamma_{0}.
$$

Since the model describes a traffic flow in a unidirectional road, we consider the elementary waves of \eqref{2.1} in the first quarter of the $(u,\rho)$ plane. Based on the above discussion, we use the curve $\Gamma_+$ to separate the first quarter into two regions. For convenience, we will view them as $D_{1}$ and $D_2$:
\begin{equation}\label{2.4}
\begin{array}{cc}
D_1  =\Big\{(u,\rho)\big|0<u<\gamma \rho^{\gamma}~\Big\}\quad{\rm and}\quad
D_2  =\Big\{(u,\rho)\big|u>\gamma \rho^{\gamma}~\Big\}.
\end{array}
\end{equation}
In either of the two regions, system \eqref{2.1} is strictly hyperbolic and we have
$$
\lambda_1<\lambda_3<\lambda_2 \ \ {\rm in} \ \ D_1,\quad
\lambda_2>\lambda_1>\lambda_3\ \ {\rm in}\ \ D_2.
$$
\subsection{The rarefaction waves}
We look for self-similar solutions $U(\xi)=(u,\rho,a)(\xi), \xi=x/t$. The Riemann invariant $w_i$ corresponding to $\lambda_i$ should satisfy
$$
\triangledown w_i \cdot \vec{r}_i=0,\quad i=1,2.
$$
The Riemann invariants along the $\lambda_1$ characteristic field are
\begin{equation}\label{2.7}
w_1^1=u+\rho^{\gamma}\quad {\rm and}\quad  w_1^2=a.
\end{equation}
The Riemann invariants along the $\lambda_2$ characteristic field are
\begin{equation}\label{2.6'}
w_2^1=u,\quad {\rm and}\quad w_2^2=a.
\end{equation}

From \eqref{2.7}, we have that $a(x)$ remains constant across rarefaction wave. Therefore system \eqref{1.8} degenerates to system \eqref{1.1}.

For a given left-hand state $U_0=(u_0,\rho_0,a_0)$, we determine the right-hand state $U=(u,\rho,a)$, which can be connected to $U_0$ by rarefaction curves as
\begin{equation}\label{2.8}
R(U,U_0):\left\{
\begin{array}{l}
\xi=\lambda_1=u-\gamma \rho^{\gamma},\\
u-u_0=-\rho^{\gamma}+\rho_0^{\gamma},\\
\rho<\rho_0,u>u_0.
\end{array}
\right.
\end{equation}
\subsection{The discontinuity solutions}
For a discontinuity at $\xi=\sigma$, the Rankine-Hugoniot relation associated with the last equation of \eqref{1.8} is that
\[-\sigma[a]=0,\]
where $[a]:~=a_1-a_0$ is the jump of the cross-section $a$. We have the following conclusions:\\
1)~~$[a]=0:$~the cross-section~$a$~remains constant across the non-zero speed shocks;\\
2)~~$\sigma=0:$~the shock speed vanishes, here we assume $[a]\not=0$ and call it stationary contact discontinuity.

From case 1), the Rankine-Hugoniot conditions corresponding to \eqref{1.8} is given by
\begin{equation}\label{2.9}
\left\{\begin{array}{ll}
-\sigma[\rho]=[\rho u],\\[5pt]
-\sigma[\rho(u+p)]=[\rho u(u+p)],
\end{array}\right.
\end{equation}
where $[f]=f_1-f_0$ is the jump of the function $f$, $\sigma$ is the speed of propagation of the discontinuity connecting the states $U_0$ and $U_1$.

By solving \eqref{2.9}, we obtain the contact discontinuity $J(U,U_0)$
$$
J(U,U_0):u=u_0=\sigma,\rho\neq \rho_0.
$$

For a non-zero speed shock wave. Assume that $U_0$ is a given left-hand state, the shock curve $S(U,U_0)$ consisting of all right-hand states $U$ satisfying Lax shock condition \cite{Lax9} is
\begin{equation}\label{2.10}
S(U,U_0):\left\{
\begin{array}{l}
\displaystyle \sigma=u+\frac{\rho_0(\rho_0^{\gamma}-\rho^{\gamma})}{\rho-\rho_0},\\[8pt]
u-u_0=-\rho^{\gamma}+\rho_0^{\gamma},\\[2pt]
\rho>\rho_0,u<u_0.
\end{array}
\right.
\end{equation}

Here we see that the shock wave curves coincide with the rarefaction wave curves in the phase plane. Moreover, we have
$$
\frac{\rm{d}u}{\rm{d}\rho}=-\gamma \rho^{\gamma-1}<0,\ \ \frac{\rm{d}^2 u}{\rm{d} \rho^2}=-\gamma(\gamma-1) \rho^{\gamma-2}<0.
$$
So both the rarefaction wave curve and shock wave curve are decreasing and concave in the $(u,\rho)$ plane.

Next we turn to the stationary contact discontinuity. As in \cite{LeflochThanh11}, a stationary solution is independent of time, thus we search for time-independent smooth solutions of the ordinary differential equations
\begin{equation}\label{2.11}
\left\{
\begin{array}{ll}
\displaystyle (a\rho u)'=0,\\
(a\rho u(u+p))'=\rho u p a'.
\end{array}
\right.
\end{equation}
where $(f)'$ denotes $\frac{{\rm{d}}f}{{\rm d}x}$, etc. By solving \eqref{2.11}, we have the following lemma.
\begin{lem}
For smooth solutions, system \eqref{2.11} is equivalent to
\begin{equation}\label{2.12}
\left\{
\begin{array}{ll}
\displaystyle a_0 \rho_0 u_0=a\rho u,\\[3pt]
\displaystyle u_0^{\frac{\gamma}{1+\gamma}}\left(\rho_0^{\gamma}+\frac{\gamma}{1+2\gamma}u_0\right)=
u^{\frac{\gamma}{1+\gamma}}\left(\rho^{\gamma}+\frac{\gamma}{1+2\gamma}u\right),
\end{array}
\right.
\end{equation}
where $(u_0,\rho_0,a_0)$ and $(u,\rho,a)$  represent the left-hand state and right-hand state of the stationary discontinuity respectively.
\end{lem}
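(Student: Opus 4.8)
The plan is to integrate the two ordinary differential equations in \eqref{2.11} one at a time, reducing the second to a single first-order equation in the $(u,\rho)$ plane after the width $a$ has been eliminated. The first equation $(a\rho u)'=0$ integrates immediately to $a\rho u = a_0\rho_0 u_0$, which is the first relation in \eqref{2.12}; I set $m:=a\rho u$ and assume $m\neq 0$ (the case $m=0$ being degenerate). This conserved flux is then the tool for simplifying the second equation.

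For the second equation I would use the conservation law on both sides. Since $a\rho u(u+p)=m(u+p)$ with $m$ constant, the left-hand side becomes $m(u+p)'$, while the right-hand side is $\rho u p\, a' = (m/a)\,p\,a' = m\,p\,(a'/a)$. Dividing by $m$ gives the reduced relation $u'+p' = p\,(a'/a)$. To remove $a$, I take the logarithmic derivative of $a\rho u=m$, namely $a'/a = -\rho'/\rho - u'/u$, and substitute; using $p=\rho^{\gamma}$ so that $p'/p=\gamma\,\rho'/\rho$, this collapses to the single planar equation $(u+p)\,du + \frac{\gamma+1}{\gamma}\,u\,dp = 0$ relating $u$ and $p$ alone.

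The key step is to integrate this last equation. It is not exact, but an integrating factor depending only on $u$ resolves it: solving $\mu'/\mu = -1/\left[(\gamma+1)u\right]$ gives $\mu = u^{-1/(\gamma+1)}$. Multiplying through and integrating produces a first integral which, after factoring out the constant $\frac{\gamma+1}{\gamma}$ and rewriting $p=\rho^{\gamma}$, is exactly $u^{\gamma/(1+\gamma)}\left(\rho^{\gamma} + \frac{\gamma}{1+2\gamma}u\right)$. Setting it equal to its value at $(u_0,\rho_0)$ yields the second relation in \eqref{2.12}. Conversely, differentiating the two relations in \eqref{2.12} and clearing the common factors recovers \eqref{2.11}, which establishes the equivalence for smooth solutions.

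I expect the main obstacle to be the bookkeeping in the integrating-factor step: correctly reducing to the two-variable form $(u+p)\,du+\frac{\gamma+1}{\gamma}u\,dp=0$ (where the elimination of $a$ must be carried out cleanly), then recognizing the $u$-only integrating factor and matching the resulting exponent $\gamma/(1+\gamma)$ and the coefficient $\gamma/(1+2\gamma)$ against the stated formula. Everything else is routine integration.
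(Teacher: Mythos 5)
Your proposal is correct and follows essentially the same route as the paper: integrate $(a\rho u)'=0$ to get the constant flux, eliminate $a$ via the logarithmic derivative $a'/a=-\rho'/\rho-u'/u$, reduce to the planar relation $(u+p)\,du+\tfrac{\gamma+1}{\gamma}u\,dp=0$ (the paper's \eqref{2.14}--\eqref{2.15}), and integrate it to obtain the first integral $u^{\gamma/(1+\gamma)}\bigl(\rho^{\gamma}+\tfrac{\gamma}{1+2\gamma}u\bigr)=C$. The only cosmetic difference is that you treat \eqref{2.15} as a non-exact Pfaffian form with integrating factor $u^{-1/(\gamma+1)}$, while the paper solves it directly as a linear first-order ODE in $\rho^{\gamma}(u)$; both computations are the same and yield the correct exponent and coefficient.
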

\begin{proof}
If we give the left-hand state $(u_0,\rho_0,a_0)$, the first equation of \eqref{2.11} is equivalent to
$$
a\rho u=a_0\rho_0 u_0.
$$
Differential the above equation with respect to $x$, we have
\begin{equation}\label{2.13}
a'=-\frac{a\rho'}{\rho}-\frac{au'}{u}.
\end{equation}
Substituting \eqref{2.13} into the second equation of \eqref{2.11}, we get
\begin{equation}\label{2.14}
\displaystyle \left(1+\frac{\rho^{\gamma}}{u}\right)u'+(1+\gamma)\rho^{\gamma-1}\rho'=0,
\end{equation}
which is equivalent to
\begin{equation}\label{2.15}
\frac{\rm{d}\rho^{\gamma}}{\rm{d}u}=-\frac{\gamma}{u(1+\gamma)}\rho^{\gamma}-\frac{\gamma}{1+\gamma}.
\end{equation}
By solving \eqref{2.15}, we have
\begin{equation}\label{2.16}
u^{\frac{\gamma}{1+\gamma}}\left(\rho^{\gamma}+\frac{\gamma}{1+2\gamma}u\right)=C,
\end{equation}
where $C$ is a constant, which is given by
$$
C=u_0^{\frac{\gamma}{1+\gamma}}\left(\rho_0^{\gamma}+\frac{\gamma}{1+2\gamma}u_0\right).
$$
It follows the lemma.
\end{proof}
If we fix the left-hand state $(u_0,\rho_0,a_0)$ across the stationary contact discontinuity. From  \eqref{2.12}, we have
\begin{equation}\label{2.17}
\begin{array}{l}
\displaystyle F(\rho):=k_0\rho^{\gamma+1}-k_1\rho^{\frac{1+2\gamma}{1+\gamma}}+k_2=0,
\end{array}
\end{equation}
where
\begin{equation}\label{2.17'}
k_0=\left(\frac{a_0\rho_0u_0}{a}\right)^{\frac{\gamma}{1+\gamma}},
k_1=u_0^{\frac{\gamma}{1+\gamma}}(\rho_0^{\gamma}+\frac{\gamma}{1+2\gamma}u_0),
k_2=\frac{\gamma}{1+2\gamma}\left(\frac{a_0\rho_0u_0}{a}\right)^{\frac{1+2\gamma}{1+\gamma}}.
\end{equation}
It is obviously that
\begin{equation}\label{2.18}
\lim_{\rho\to 0}{F(\rho)}=k_2>0,~~~\lim_{\rho\to +\infty}{F(\rho)}=+\infty.
\end{equation}
Furthermore, we have that
\begin{equation}\label{2.19}
\displaystyle \frac{\rm{d}F(\rho)}{\rm{d}\rho}=k_0(\gamma+1)\rho^{\gamma}-k_1\frac{1+2\gamma}{1+\gamma}\rho^{\frac{\gamma}{1+\gamma}}.
\end{equation}
So
\begin{equation}\label{2.20}
\frac{\rm{d}F(\rho)}{\rm{d}\rho}\left\{\begin{array}{ll}
<0,&\rho<\rho_{min},\\[5pt]
>0,&\rho>\rho_{min},
\end{array}\right.
\end{equation}
where
$$
\rho_{min}=\left(\frac{k_1(1+2\gamma)}{k_0(\gamma+1)^2}\right)^{\frac{1+\gamma}{\gamma^2}}.
$$
From  \eqref{2.17'}, we know that $k_0$, $k_1$ are determined by $U_0$ and $a$. So if $U_0$ is given, $\rho_{min}$ depends only on $a$.  From \eqref{2.18}, \eqref{2.17} admits a solution if and only if
$$
F(\rho_{min})\leq0.
$$
We assume $F(\rho_{min})<0$ here and after, then $F(\rho)=0$ admits exactly two values $\rho_*(U_0)<\rho_{min}<\rho^{*}(U_0)$, such that
\begin{equation}\label{2.21}
F(\rho_*(U_0))=F(\rho^{*}(U_0))=0.
\end{equation}
We have the following lemma.
\begin{lem}
Given the left-hand state $U_0=(u_0,\rho_0,a_0)$, there exits a stationary contact discontinuity connecting to the right-hand state $U=(u,\rho,a)$ if and only if $F(\rho_{min})\leq0$.
More precisely, \\
1) If  $F(\rho_{min})>0$, there are no stationary contacts;\\
2) If $F(\rho_{min})\leq0$, there are exactly two points $U_*=(u_*,\rho_*,a),~~U^{*}=(u^{*},\rho^{*},a)$ satisfying
$$
F(\rho_*;U_0)=F(\rho^{*};U_0)=0,
$$
where $\rho_*(U_0)<\rho_{min}<\rho^{*}(U_0)$, moreover, the two values $U_*$ and $U^{*}$ coincide if $F(\rho_{min})=0$.
\end{lem}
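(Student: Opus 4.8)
The plan is to reduce the entire statement to the behaviour of the single scalar function $F$ defined in \eqref{2.17}. By the preceding lemma, for a prescribed left state $U_0=(u_0,\rho_0,a_0)$ a stationary contact connecting to $U=(u,\rho,a)$ is governed by the two relations \eqref{2.12}; fixing $U_0$ and the target width $a$, the second relation is exactly the equation $F(\rho)=0$, while the first relation $a_0\rho_0u_0=a\rho u$ then recovers $u$ uniquely from $\rho$. Hence the existence of a stationary contact is \emph{equivalent} to the existence of a positive root of $F$, and counting contacts is the same as counting roots of $F$ on $(0,+\infty)$; this is the bridge that turns the claim into a one-variable calculus problem.

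First I would assemble the qualitative picture of $F$ that is already laid out in the excerpt. The function is smooth on $(0,+\infty)$, and by \eqref{2.18} it satisfies $F(0^+)=k_2>0$ together with $F(\rho)\to+\infty$ as $\rho\to+\infty$. For the shape in between, I would factor $\rho^{\gamma/(1+\gamma)}$ out of the derivative \eqref{2.19}, writing $F'(\rho)=\rho^{\gamma/(1+\gamma)}\big(k_0(\gamma+1)\rho^{\gamma^2/(1+\gamma)}-k_1\tfrac{1+2\gamma}{1+\gamma}\big)$, using $\gamma-\tfrac{\gamma}{1+\gamma}=\tfrac{\gamma^2}{1+\gamma}$. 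Since $k_0,k_1>0$ by \eqref{2.17'}, the bracket is strictly increasing in $\rho$, so it vanishes at a single point, namely $\rho_{min}$, and changes sign there from negative to positive. This yields \eqref{2.20}: $F$ is strictly decreasing on $(0,\rho_{min})$ and strictly increasing on $(\rho_{min},+\infty)$, so $\rho_{min}$ is the unique global minimiser and $F(\rho_{min})$ is the minimum value of $F$.

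The three cases then follow from the intermediate value theorem combined with strict monotonicity on each side of $\rho_{min}$. If $F(\rho_{min})>0$, then $F\ge F(\rho_{min})>0$ everywhere, so $F$ has no root and there is no stationary contact, which is case 1). If $F(\rho_{min})=0$, the minimum value zero is attained at the single point $\rho_{min}$, so $\rho_{min}$ is the unique root and the two candidate states coincide. If $F(\rho_{min})<0$, then on $(0,\rho_{min})$ the function falls from the positive value $F(0^+)=k_2$ to the negative value $F(\rho_{min})$, so strict monotonicity forces exactly one root $\rho_*\in(0,\rho_{min})$; symmetrically, on $(\rho_{min},+\infty)$ it rises from $F(\rho_{min})<0$ to $+\infty$, producing exactly one root $\rho^*\in(\rho_{min},+\infty)$. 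These exhaust the roots, so there are precisely two, with $\rho_*<\rho_{min}<\rho^*$ as in \eqref{2.21}, proving case 2) and simultaneously the combined ``iff $F(\rho_{min})\le0$'' assertion.

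The only genuinely substantive point, as opposed to bookkeeping, is the monotonicity statement \eqref{2.20}: the whole count rests on $F'$ having a single sign change, so that $F$ is strictly unimodal. I would therefore justify the factorization of $F'$ and the positivity of $k_0,k_1$ with care, since a second critical point would a priori permit more than two roots and break the clean dichotomy. Everything else is a direct application of continuity and the intermediate value theorem.
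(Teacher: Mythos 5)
Your proposal is correct and follows essentially the same route as the paper, which proves this lemma implicitly through the discussion preceding it: the limits \eqref{2.18}, the sign analysis of $F'$ in \eqref{2.19}--\eqref{2.20} showing $F$ is strictly unimodal with unique minimiser $\rho_{min}$, and the root count \eqref{2.21} via the intermediate value theorem. Your explicit factorization of $F'$ and the remark that the first equation of \eqref{2.12} recovers $u$ from $\rho$ are just careful spellings-out of steps the paper leaves tacit.
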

We denote the stationary contact discontinuity by $S_0(U,U_0)$, the states $U_*=(u_*,\rho_*,a)$ and $U^*=(u^{*},\rho^{*},a)$ have the following properties.
\begin{lem}
Given the left-hand state $(u_0,\rho_0,a_0)$, the two states $U_*=(u_*,\rho_*,a)$ and $U^{*}=(u^{*},\rho^{*},a)$ that can be connected  by $S_0(U;U_0)$ satisfing $U^{*}\in D_1,~U_*\in D_2$.
\end{lem}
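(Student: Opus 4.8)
\emph{Proof proposal.} The plan is to read off the location of the two stationary states relative to the sonic curve $\Gamma_+:u=\gamma\rho^{\gamma}$ directly from the mass balance, and then to show that the density separating the two roots of \eqref{2.17} is exactly the point at which the stationary curve crosses $\Gamma_+$. Throughout I fix $a$ and write $C:=a_0\rho_0u_0/a>0$, so that by the first equation of \eqref{2.12} every state on $S_0(U;U_0)$ lies on the hyperbola $u=C/\rho$. Along this hyperbola the quantity $u-\gamma\rho^{\gamma}=C/\rho-\gamma\rho^{\gamma}$ is strictly decreasing in $\rho$ and vanishes precisely at $\rho_{\Gamma}:=(C/\gamma)^{1/(1+\gamma)}$; hence a state with $\rho<\rho_{\Gamma}$ satisfies $u>\gamma\rho^{\gamma}$ and lies in $D_2$, while a state with $\rho>\rho_{\Gamma}$ satisfies $u<\gamma\rho^{\gamma}$ and lies in $D_1$ (recall \eqref{2.4}). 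Thus the whole assertion reduces to the single ordering $\rho_*<\rho_{\Gamma}<\rho^{*}$.

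To obtain this ordering I would substitute $u=C/\rho$ into the second relation of \eqref{2.12} and record that the root condition can be rewritten as $g(\rho)=k_1$, where $g(\rho):=k_0\rho^{\gamma^2/(1+\gamma)}+k_2\rho^{-(1+2\gamma)/(1+\gamma)}$; indeed one checks $F(\rho)=\rho^{(1+2\gamma)/(1+\gamma)}\bigl(g(\rho)-k_1\bigr)$, so for $\rho>0$ the zeros of $F$ in \eqref{2.17} are exactly the solutions of $g(\rho)=k_1$. Since $k_0,k_2>0$, the function $\rho g'(\rho)=\tfrac{\gamma^2}{1+\gamma}k_0\rho^{\gamma^2/(1+\gamma)}-\tfrac{1+2\gamma}{1+\gamma}k_2\rho^{-(1+2\gamma)/(1+\gamma)}$ is strictly increasing from $-\infty$ to $+\infty$, so $g$ is strictly decreasing and then strictly increasing, with a unique global minimum. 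The key computation is that this minimizer sits exactly at $\rho_{\Gamma}$: solving $g'(\rho)=0$ gives $\rho^{1+\gamma}=\tfrac{1+2\gamma}{\gamma^2}\cdot\tfrac{k_2}{k_0}$, and since $k_2/k_0=\tfrac{\gamma}{1+2\gamma}C$ this collapses to $\rho^{1+\gamma}=C/\gamma$, i.e. $\rho=\rho_{\Gamma}$.

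With the minimizer of $g$ identified as $\rho_{\Gamma}$, the conclusion is immediate. Under the standing assumption $F(\rho_{min})<0$ two distinct states exist, so there is a $\rho$ with $g(\rho)<k_1$, whence $k_1$ strictly exceeds the minimum $g(\rho_{\Gamma})$; as $g$ is unimodal and tends to $+\infty$ at both ends, $g(\rho)=k_1$ then has exactly one solution in $(0,\rho_{\Gamma})$ and one in $(\rho_{\Gamma},+\infty)$. These are precisely the two roots $\rho_*<\rho^{*}$ of $F$, so $\rho_*<\rho_{\Gamma}<\rho^{*}$. Combining with the first paragraph, $\rho_*<\rho_{\Gamma}$ forces $U_*\in D_2$ and $\rho^{*}>\rho_{\Gamma}$ forces $U^{*}\in D_1$, which is the claim. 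I expect the only real obstacle to be the algebraic identity showing that the critical point of $g$ coincides with the sonic density $\rho_{\Gamma}$; the monotonicity of $u-\gamma\rho^{\gamma}$ along the hyperbola and the unimodality of $g$ are routine.
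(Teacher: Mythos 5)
Your proof is correct, but it is organized differently from the paper's. The paper first disposes of $u_*,u^*>0$ via the mass relation, and then proves $u_*>\gamma\rho_*^{\gamma}$ by direct algebra: it raises the inequality $\rho_*<\rho_{min}$ to the power $\gamma^{2}/(1+\gamma)$, substitutes the explicit expressions \eqref{2.17'} for $k_0,k_1$, and then uses both equations of \eqref{2.12} to eliminate $(u_0,\rho_0,a_0)$ in favour of $(u_*,\rho_*,a)$, after which the inequality collapses to $u_*>\gamma\rho_*^{\gamma}$ (the subsonic case being declared similar). You instead make the structure explicit: all stationary states sharing the width $a$ lie on the mass-flux hyperbola $u=C/\rho$, which crosses $\Gamma_+$ at the single density $\rho_\Gamma=(C/\gamma)^{1/(1+\gamma)}$, and the rescaled function $g=\rho^{-(1+2\gamma)/(1+\gamma)}F+k_1$ is unimodal with its minimizer exactly at $\rho_\Gamma$ because $k_2/k_0=\tfrac{\gamma}{1+2\gamma}C$; hence the two roots of $F$ automatically straddle the sonic density, which is precisely the claim. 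The two arguments rest on the same ingredients (the two relations in \eqref{2.12} and the fact that the roots of $F$ bracket a critical density), but yours isolates the reason the lemma is true --- the critical point of the normalized stationary-wave equation coincides with the sonic point on the hyperbola --- whereas the paper's computation verifies the conclusion without exhibiting this identity. Your version also treats both roots symmetrically in one stroke and does not need the ordering $\rho_*<\rho_{min}<\rho^*$ from Lemma 3.2, only the existence of two roots; I verified the key identity $\rho^{1+\gamma}=\tfrac{1+2\gamma}{\gamma^{2}}\,k_2/k_0=C/\gamma$ and the factorization $F(\rho)=\rho^{(1+2\gamma)/(1+\gamma)}\bigl(g(\rho)-k_1\bigr)$, and both check out.
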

\begin{proof}
From the first equation of \eqref{2.12}, the conclusion $u_*>0, u^*>0$ can be easily derived.
It is sufficient to prove that $u_*>\gamma \rho_*^{\gamma},~u^*<\gamma \rho^*$. We prove $u_*>\gamma\rho_*^{\gamma}$. From the above discussion, we have
\begin{equation}\label{2.22}
\rho_*<\rho_{min}=\left(\frac{k_1(1+2\gamma)}{k_0(\gamma+1)^2}\right)^{\frac{1+\gamma}{\gamma^2}}.
\end{equation}
It follows that
$$
\rho_*^{\frac{\gamma^2}{\gamma+1}}<\frac{1+2\gamma}{(\gamma+1)^2}
u_0^{\frac{\gamma}{1+\gamma}}\left(\rho_0^{\gamma}+\frac{\gamma}{1+2\gamma}u_0\right)
\left(\frac{a}{a_0\rho_0u_0}\right)^{\frac{\gamma}{\gamma+1}}.
$$
Combining with \eqref{2.12}, we have $u_*>\gamma\rho_*^{\gamma}$. For the proof of $u^*<\gamma\rho^*$, it is similar, we omit here. Thus proving the lemma.
\end{proof}
As in \cite{Han,LeflochThanh11,Sheng16}, the Riemann problem for \eqref{1.2} may admit up to a one-parameter family of solutions. This phenomenon can be avoided by requiring Riemann
solutions to satisfy an \emph{admissibility criterion}: monotone condition on the component $a$. Following  \cite{Andrianov,LeflochThanh11}, we
impose the following global entropy condition on stationary contact discontinuity.

\noindent%
{\bf Global entropy condition.} Given $U_0=(u_0,\rho_0,a_0)$, the state $a$ is implicitly determined by $\rho$ from \eqref{2.17}. The width $a=a(\rho)$ obtained from \eqref{2.17} is a monotone function of $\rho$ along the stationary curve $S_0(U,U_0)$ in the $(u,\rho)$-plane.

Under the global entropy condition, we call the stationary contact discontinuity as stationary wave in traffic flow. Under the transformation
$$
x\mapsto -x,~~u\mapsto -u,
$$
a right-hand state $U=(u,\rho,a)$ becomes a left-hand state $U=(-u,\rho,a)$. Without loss of generality, we assume $a$ is an increasing function of $\rho$ from now on, i.e. $a_0<a$.
\begin{lem}
Global entropy condition is equivalent to the statement that any
stationary wave has to remain in the closure of only one domain $D_i, i=1,2$, see Fig.2.1.
\end{lem}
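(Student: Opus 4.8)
The plan is to reduce the global entropy condition to a single sign computation for ${\rm d}a/{\rm d}\rho$ along the stationary curve, and to show that this derivative vanishes exactly on $\Gamma_+$. First I would parametrize $S_0(U,U_0)$ by the density $\rho$: by the first equation of \eqref{2.12} the width is $a=a_0\rho_0u_0/(\rho u)$, where $u=u(\rho)$ is the function implicitly defined by \eqref{2.16}, so that
\begin{equation*}
\frac{{\rm d}a}{{\rm d}\rho}=-\frac{a_0\rho_0u_0}{(\rho u)^2}\left(u+\rho\,\frac{{\rm d}u}{{\rm d}\rho}\right).
\end{equation*}
Since $a_0\rho_0u_0>0$ and $\rho u>0$, the sign of ${\rm d}a/{\rm d}\rho$ is opposite to that of the factor $u+\rho\,{\rm d}u/{\rm d}\rho$.

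Next I would read off ${\rm d}u/{\rm d}\rho$ from the defining ODE \eqref{2.14} of the stationary curve, namely
\begin{equation*}
\frac{{\rm d}u}{{\rm d}\rho}=-\frac{(1+\gamma)\rho^{\gamma-1}u}{u+\rho^{\gamma}},
\end{equation*}
and substitute it into the previous line. A direct simplification gives
\begin{equation*}
u+\rho\,\frac{{\rm d}u}{{\rm d}\rho}=\frac{u\,(u-\gamma\rho^{\gamma})}{u+\rho^{\gamma}}.
\end{equation*}
Because $u>0$ and $u+\rho^{\gamma}>0$ throughout the first quadrant, this quantity has the same sign as $u-\gamma\rho^{\gamma}$. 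Hence ${\rm d}a/{\rm d}\rho<0$ on $D_2$, ${\rm d}a/{\rm d}\rho>0$ on $D_1$, and ${\rm d}a/{\rm d}\rho=0$ exactly on $\Gamma_+:u=\gamma\rho^{\gamma}$, the common boundary of $D_1$ and $D_2$. In particular $a(\rho)$ can possess a critical point only where $S_0(U,U_0)$ meets $\Gamma_+$.

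Finally I would assemble the equivalence. Along the arc of $S_0(U,U_0)$ joining $U_0$ to $U$, the width $a(\rho)$ is monotone if and only if ${\rm d}a/{\rm d}\rho$ keeps a constant sign, and by the computation above this holds if and only if $u-\gamma\rho^{\gamma}$ does not change sign on the arc, i.e.\ the arc never passes from the interior of one domain into the interior of the other. This is precisely the requirement that the stationary wave remain in the closure of a single $D_i$. For the converse one uses that \eqref{2.16} is strictly decreasing (since ${\rm d}u/{\rm d}\rho<0$) while $\Gamma_+$ is strictly increasing, so the two curves meet in at most one point; if that point lies in the interior of the arc, the arc crosses $\Gamma_+$ transversally and $a(\rho)$ attains an interior extremum there, destroying monotonicity. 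I expect the only delicate point to be the boundary bookkeeping: "remaining in the closure of one $D_i$" must be read so as to permit an endpoint of the arc to sit on $\Gamma_+$ (where ${\rm d}a/{\rm d}\rho$ may vanish) while forbidding a genuine interior crossing, which is exactly what the monotonicity statement allows and forbids.
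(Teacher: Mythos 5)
Your proposal is correct and follows essentially the same route as the paper: both differentiate the stationary relations \eqref{2.12} to show that the derivative of $a$ along $S_0(U,U_0)$ is controlled in sign by $u-\gamma\rho^{\gamma}$, so monotonicity of $a$ is equivalent to the curve not crossing $\Gamma_+$. The only difference is cosmetic --- you parametrize by $\rho$ and obtain $\frac{{\rm d}a}{{\rm d}\rho}=-\frac{a(u-\gamma\rho^{\gamma})}{\rho(u+\rho^{\gamma})}$, while the paper writes the reciprocal relations \eqref{2.24} with $a$ as the parameter.
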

\begin{proof}
Given the left-hand state $(u_0,\rho_0,a_0)$, we differential \eqref{2.12} and get
\begin{equation}\label{2.23}
\left\{\begin{array}{ll}
\displaystyle \frac{{\rm{d}}a}{a}+\frac{\rm{d}\rho}{\rho}+\frac{{\rm{d}}u}{u}=0,\\[8pt]
\displaystyle \frac{{\rm{d}}u}{\rm{d}\rho}=-\frac{u(\gamma+1)\rho^{\gamma-1}}{u+\rho^{\gamma}}.
\end{array}\right.
\end{equation}
We substitute the first equation into the second one and have
\begin{equation}\label{2.24}
\left\{\begin{array}{ll}
\displaystyle \frac{{\rm{d}}u}{{\rm{d}}a}=\frac{u(\gamma+1)\rho^{\gamma}}{a(u-\gamma \rho^{\gamma})},\\[9pt]
\displaystyle \frac{\rm{d}\rho}{{\rm{d}}a}=-\frac{\rho(u+\rho^{\gamma})}{a(u-\gamma\rho^{\gamma})}.
\end{array}\right.
\end{equation}
Since $a$ is increasing, so ${\rm{d}}a>0$. We conclude that $u_0>\gamma \rho_0^{\gamma} (u_0<\gamma \rho_0^{\gamma})$ if and only if $u_*>\gamma \rho_{*}^{\gamma}(u^*<\gamma (\rho^{*})^{\gamma})$.
Thus we prove the lemma.
\end{proof}

\unitlength 1mm 
\linethickness{0.4pt}
\ifx\plotpoint\undefined\newsavebox{\plotpoint}\fi 
\begin{picture}(80.75,59.75)(-40,3)
\footnotesize
\put(13.5,16.75){\vector(1,0){58.25}}
\put(15.25,14.25){\vector(0,1){45.5}}
\put(12.5,58){$\rho$}
\put(75.5,15.75){$u$}
\qbezier(15.25,16.75)(32.2,43.63)(51.5,53)
\put(55,54.25){$\Gamma_+:u=\gamma\rho^{\gamma}$}
\put(16,7){Fig.2.1. The curve of $S_0$ in $(u,\rho)$ plane.}
\put(33,39.5){\circle{1}}
\put(29,35.25){\circle{1}}
\qbezier(32.6,40)(25.75,49.63)(15.25,51.75)
\qbezier(29.5,34.9)(38.25,29.75)(39,16.75)
\put(25.75,46.95){\circle*{1}}
\put(23,22){$S_0(U,U_0)$}
\put(36.75,26.5){\circle*{1}}
\put(18,52){$a>a_0$}
\put(29,46){$a<a_0$}
\put(22.75,43.25){$U_0$}
\put(35,33.6){$a<a_0$}
\put(39,26.5){$U_0$}
\put(41,21.5){$a>a_0$}
\end{picture}

In order to construct the Riemann solution, we have to know when do the shock waves coincide with the stationary waves? that is when does the shock speed equals to zero?

From \eqref{2.10}, we have
\begin{equation}\label{2.25}
\begin{array}{ll}
\displaystyle \sigma =\sigma(U,U_0)=u+\frac{\rho_0(\rho_0^{\gamma}-\rho^{\gamma})}{\rho-\rho_0}
=u_0+\frac{\rho(\rho_0^{\gamma}-\rho^{\gamma})}{\rho-\rho_0},\ \ \rho>\rho_0.
\end{array}
\end{equation}
Let $\sigma=0$, which yields
$$
u_0=\rho \frac{\rho^{\gamma}-\rho_0^{\gamma}}{\rho-\rho_0}>\gamma \rho_0^{\gamma}.
$$
It is equivalent to $U_0\in D_2$. To be more explicitly, we have the following lemma.
\begin{lem}
The shock speed $\sigma(U,U_0)$ may change its sign along the shock curve $S(U,U_0)$, more precisely, \\
1)~ If $U_0\in D_1$, then $\sigma(U,U_0)$ remains negative
\begin{equation}\label{2.260}
\sigma(U,U_0)<0.
\end{equation}
2)~ If $U_0\in D_2$, then there exists a point $U=\widetilde{U}_0 \in D_1$ which is on the shock curve $S(U,U_0)$ such that
\begin{equation}\label{2.26}
\left\{\begin{array}{lll}
\displaystyle \sigma(\widetilde{U}_0,U_0)=0,\\[5pt]
\displaystyle  \sigma(U,U_0)>0,~~\rho \in(\rho_0,\widetilde{\rho}_0),\\[5pt]
\displaystyle  \sigma(U,U_0)<0,~~\rho \in(\widetilde{\rho}_0,+\infty).
\end{array}\right.
\end{equation}
\end{lem}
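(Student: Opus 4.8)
The plan is to reduce the entire statement to the sign of a single scalar function of $\rho$. Using the second expression in \eqref{2.25}, I would write
\[
\sigma(U,U_0)=u_0-g(\rho),\qquad g(\rho):=\rho\,\frac{\rho^{\gamma}-\rho_0^{\gamma}}{\rho-\rho_0},\quad \rho>\rho_0,
\]
so that the sign of $\sigma$ is governed entirely by the comparison between the constant $u_0$ and the function $g$. The whole lemma then amounts to understanding the monotonicity and range of $g$ on $(\rho_0,+\infty)$.

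First I would record the two boundary behaviors of $g$. The mean-value form of $\rho^{\gamma}-\rho_0^{\gamma}$ (or L'Hôpital's rule) gives $\lim_{\rho\to\rho_0^+}g(\rho)=\gamma\rho_0^{\gamma}$, while $g(\rho)\sim\rho^{\gamma}\to+\infty$ as $\rho\to+\infty$. Next I would show $g$ is strictly increasing. Writing $g=N/D$ with $N=\rho^{\gamma+1}-\rho_0^{\gamma}\rho$ and $D=\rho-\rho_0$, the quotient rule yields
\[
g'(\rho)=\frac{h(\rho)}{(\rho-\rho_0)^2},\qquad h(\rho):=\gamma\rho^{\gamma+1}-(\gamma+1)\rho_0\rho^{\gamma}+\rho_0^{\gamma+1}.
\]
The sign of $g'$ is the sign of $h$, and since $h(\rho_0)=0$ and $h'(\rho)=\gamma(\gamma+1)\rho^{\gamma-1}(\rho-\rho_0)>0$ for $\rho>\rho_0$, we obtain $h(\rho)>0$ and hence $g'(\rho)>0$ on $(\rho_0,+\infty)$. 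Thus $g$ increases strictly from $\gamma\rho_0^{\gamma}$ to $+\infty$.

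With this in hand both parts follow quickly. If $U_0\in D_1$, then $u_0<\gamma\rho_0^{\gamma}<g(\rho)$ for every $\rho>\rho_0$, so $\sigma=u_0-g(\rho)<0$, giving \eqref{2.260}. If $U_0\in D_2$, then $u_0>\gamma\rho_0^{\gamma}$ lies strictly above the infimum of $g$; by continuity, strict monotonicity, and $g\to+\infty$, there is a unique $\widetilde\rho_0>\rho_0$ with $g(\widetilde\rho_0)=u_0$, i.e.\ $\sigma(\widetilde U_0,U_0)=0$, and monotonicity of $g$ forces $\sigma>0$ on $(\rho_0,\widetilde\rho_0)$ and $\sigma<0$ on $(\widetilde\rho_0,+\infty)$, which is \eqref{2.26}.

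The one point I expect to be the most delicate is the claim $\widetilde U_0\in D_1$. Setting $\sigma=0$ in the two forms of \eqref{2.25} gives $\widetilde u=\rho_0(\widetilde\rho_0^{\gamma}-\rho_0^{\gamma})/(\widetilde\rho_0-\rho_0)$ and $u_0=\widetilde\rho_0(\widetilde\rho_0^{\gamma}-\rho_0^{\gamma})/(\widetilde\rho_0-\rho_0)$, whence the clean relation $\widetilde u=(\rho_0/\widetilde\rho_0)\,u_0$. To place $\widetilde U_0$ in $D_1$ I must verify $0<\widetilde u<\gamma\widetilde\rho_0^{\gamma}$; positivity is immediate, and after clearing the positive denominator $\widetilde\rho_0-\rho_0$ the inequality $\widetilde u<\gamma\widetilde\rho_0^{\gamma}$ reduces precisely to $h(\widetilde\rho_0)>0$ — the very polynomial inequality already proved for the monotonicity of $g$. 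Hence no new estimate is required, and the membership $\widetilde U_0\in D_1$ is secured by the work already done, completing the proof.
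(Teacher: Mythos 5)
Your proof is correct and follows essentially the same route as the paper, which only sketches the argument by noting that $\sigma=0$ forces $u_0=\rho\,\frac{\rho^{\gamma}-\rho_0^{\gamma}}{\rho-\rho_0}>\gamma\rho_0^{\gamma}$ and then states the lemma without further detail. Your monotonicity analysis of $g(\rho)=\rho\,\frac{\rho^{\gamma}-\rho_0^{\gamma}}{\rho-\rho_0}$ via the polynomial $h$, together with the observation that $\widetilde{U}_0\in D_1$ reduces to $h(\widetilde{\rho}_0)>0$, correctly supplies exactly the details the paper omits.
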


\section{The initial value problem of \eqref{1.8}}
Firstly, we consider the initial value problem of \eqref{1.8} with 
\begin{equation}\label{4.1'}
U=(u,\rho,a)=\left\{
\begin{array}{lll}
U_-=(u_-,\rho_-,a_0), \qquad x<x_1,\\[5pt]
U_0(x)=(u(x),\rho (x),a(x)), \qquad x_1<x<x_2,\\[5pt]
U_+=(u_+,\rho_+,a_1), \qquad x>x_2,
\end{array}
\right.
\end{equation}

\begin{figure}[htbp]
\begin{minipage}[t]{0.5\textwidth}
\centering
\includegraphics[width=\textwidth]{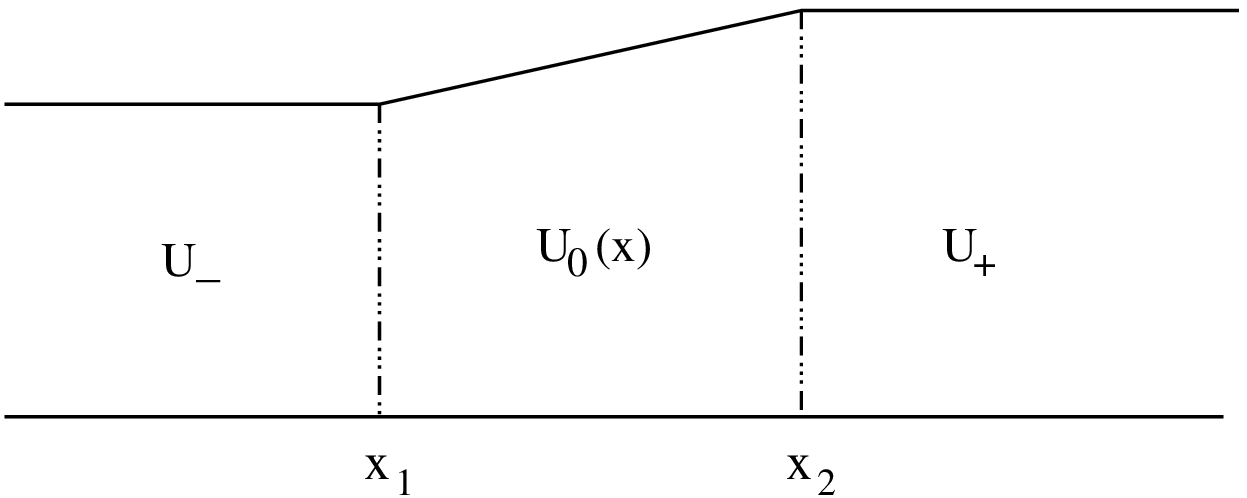}
\end{minipage}
\caption*{Fig.4.1. The initial value problem of \eqref{1.8}.}
\end{figure}
See Fig. 4.1. Solving \eqref{4.1'} is very complicated, because the wave patterns of \eqref{4.1'} involving the rarefaction waves, shock waves, contact discontinuities, stationary waves and their interactions. To make things simpler, we consider the large time behavior of the initial value problem \eqref{4.1'}. So there is no wave interaction involved, which is basically the Riemann problem of \eqref{1.8} (\cite{Liu}),  see Fig. 4.2. The only difference is that in the original initial problem \eqref{4.1'}, the states changes continuously across the stationary wave between $x_1$ and $x_2$. While in the Riemann problem, the state jumps across the stationary wave. Thus study the Riemann problem will give a direct understanding of initial value problem \eqref{4.1'}. We left the interaction of elementary waves to the future consideration.

\begin{figure}[htbp]
\begin{minipage}[t]{0.5\textwidth}
\centering
\includegraphics[width=\textwidth]{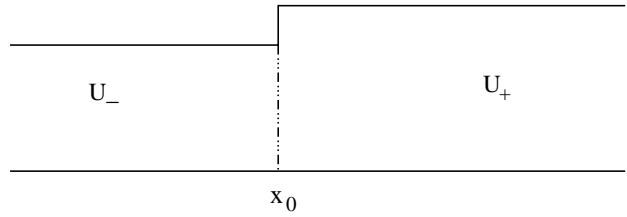}
\end{minipage}
\caption*{Fig.4.2. The initial value problem of \eqref{1.8}.}
\end{figure}

Considering the Riemann problem of equations \eqref{1.8} with initial data
\begin{equation}\label{2.5}
(u,\rho,a)=\left\{
\begin{array}{lll}
(u_-,\rho_-,a_0),~~~~x <0,\\
(u_+,\rho_+,a_1),~~~~x>0.
\end{array}
\right.
\end{equation}
The Riemann solutions admits the self-similar ones, which depends on $\xi=\frac{x}{t}$. Thus, the Riemann problem is reduced to the boundary value problem at infinity as follows
\begin{equation}\label{2.6}
\left\{
\begin{array}{lll}
-\xi(a\rho)_\xi+(a\rho u)_\xi=0,\\[5pt]
-\xi(a\rho(u+p))_\xi+(a\rho u(u+p))_\xi-\rho upa_\xi=0,\\[5pt]
-\xi a_\xi=0,  \\[5pt]
(u,\rho,a)(\pm \infty)=(u_\pm,\rho_\pm,a_{0,1}).
\end{array}
\right.
\end{equation}
In this section we establish the global existence of the Riemann problem for \eqref{1.8} with \eqref{2.5}. We will use the following notations:

(i) $S(U_j,U_i)\oplus R(U_k,U_j)$ indicates that there is a shock wave from the left-hand state $U_i$ to the right-hand state $U_j$, followed by a rarefaction wave from the left-hand state $U_j$ to the right-hand state $U_k$.

(ii) $J(U,U_m)$ denotes the contact discontinuity from the left-hand state $U_m=(u_m,\rho_m,a)$ to the right-hand state $U=(u,\rho,a)$, here we assume that $u=u_m,~\rho\neq \rho_m$.

We will construct the Riemann solutions case by case.

\noindent%
{\bf Case 1.} $U_-\in D_2$. First we define two points $U_{-*}\in D_2$ and $\widetilde{U}_-\in D_1$, where $U_{-*}\in S_0(U,U_-)$, $\widetilde{U}_-\in S(U,U_-)$ at which the shock speed vanishes,  in view of lemma 3.5.
We draw the curve $R(U,U_{-*})$, which intersects with u-axis at $\overline{U}=(\bar{u},0)$, where $\bar{u}=u_{-*}+\rho_{-*}^{\gamma}$. Similarly from lemma 3.5, we have $\widetilde{U}_{-*}\in S(U,U_{-*})$ at which the shock speed vanishes. See Fig.3.1. The relative position of $\widetilde{U}_-$ and $\widetilde{U}_{-*}$ is obtained in Lemma A. We discuss them as follows.

\begin{figure}[htbp]
\begin{minipage}[t]{0.4\textwidth}
\centering
\includegraphics[width=\textwidth]{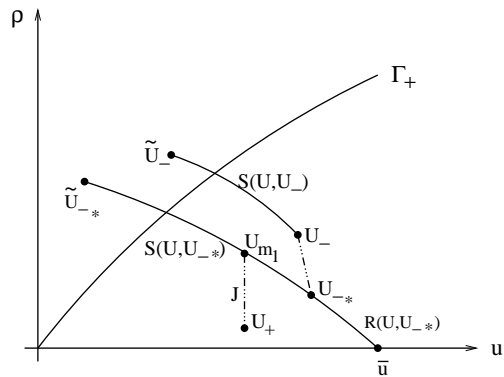}
\end{minipage}
\caption*{Fig.3.1. Case 1. $U_-\in D_2$.}
\end{figure}

\begin{figure}[htbp]
\subfigure{
\begin{minipage}[t]{0.35\textwidth}
\centering
\includegraphics[width=\textwidth]{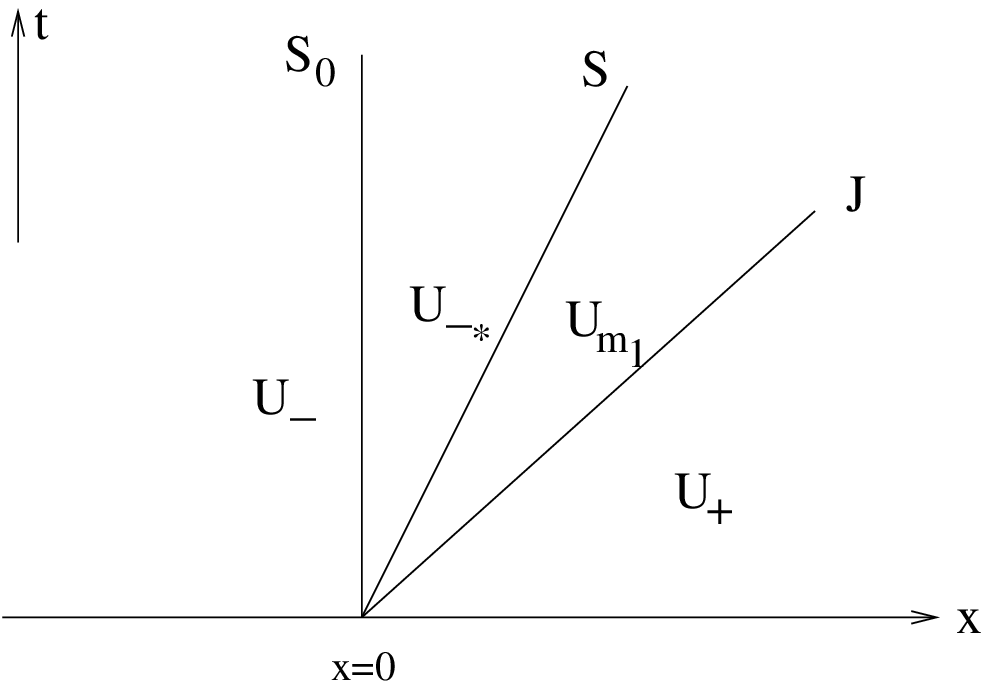}
\end{minipage}
}
\subfigure{
\begin{minipage}[t]{0.35\textwidth}
\centering
\includegraphics[width=\textwidth]{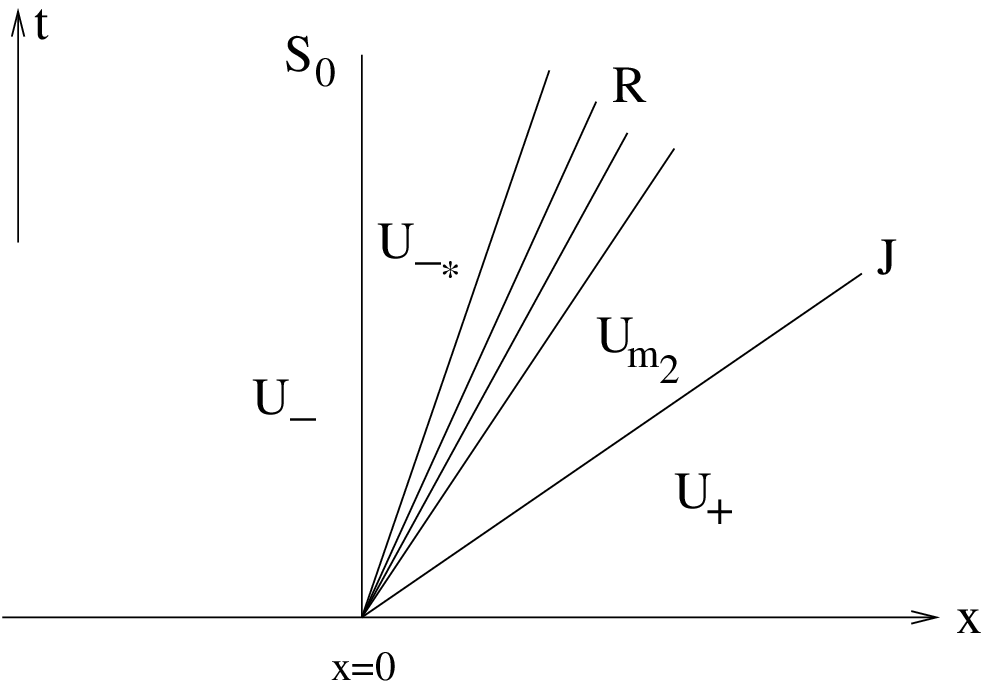}
\end{minipage}
}
\caption*{Fig.3.2. Case 1.1-3. The solutions are $S_0\oplus S({\rm or}\ R)\oplus J$.}
\end{figure}

\noindent%
{\bf Case 1.1.}  $\widetilde{u}_{-*}<u_+<u_{-*}$. Denote $J(U_+,U)\cap S(U,U_{-*})=\{U_{m_1}\}$, see Fig.3.2 (left). the Riemann solution is
$$
S_0(U_{-*},U_-)\oplus S(U_{m_1},U_{-*})\oplus J(U_+,U_{m_1}).
$$
{\bf Case 1.2.} $u_{-*}<u_+<\bar{u}$. Denote $J(U_+,U)\cap R(U,U_{-*})=\{U_{m_2}\}$, see Fig.3.2 (right). We have the Riemann solution as
$$
S_0(U_{-*},U_-)\oplus R(U_{m_2},U_{-*})\oplus J(U_+,U_{m_2}).
$$
\noindent%
{\bf Case 1.3.} $u_+>\bar{u}$, there exists a vacuum in this case. Let $J(U_+,U)\cap\{\rho=0\}=\{M\}$, then the Riemann solution is
$$
S_0(U_{-*},U_-)\oplus R(\overline{U},U_{-*})\oplus {\rm{Vacuum}} \oplus J(U_+,M).
$$

\noindent%
{\bf Case 1.4.} $0<u_+<\tilde{u}_{-}^{*}$.  Let $\widetilde{U}_-^{*}\in D_1$ which is jumped by $\widetilde{U}_-$ with stationary wave.  From any point $U_0\in S(U,\widetilde{U}_-), \rho_0>\tilde{\rho}_-$, a stationary wave jumps from $U_0$ to some state $U_0^{*}\in D_1$, which states form the curve $S_0(U_0^{*},U_0)$, see Fig.3.3. To be precise, setting
$$
S_0(U,U_0): =\big\{ U: \exists S_0(U,U_0)~{\rm from}~a_0~{\rm to}~a_1. U_0\in S(U,\widetilde{U}_-),~\rho\geq \tilde{\rho}_-^{*} \big\}.
$$
Denote $J(U_+,U)\cap S_0(U_0^{*},U_0)=\{U_{m_3}^{*}\}$, $U_{m_3}^{*}$ is jumped by stationary wave $S_0(U_{m_3}^{*},U_{m_3}), U_{m_3}\in S(U,U_-)$, see Fig.3.3. The Riemann solution in this case is
$$
S(U_{m_3},U_-)\oplus S_0(U_{m_3}^{*},U_{m_3}) \oplus J(U_+,U_{m_3}^{*}).
$$

\begin{figure}[htbp]
\subfigure{
\begin{minipage}[t]{0.35\textwidth}
\centering
\includegraphics[width=1.1\textwidth]{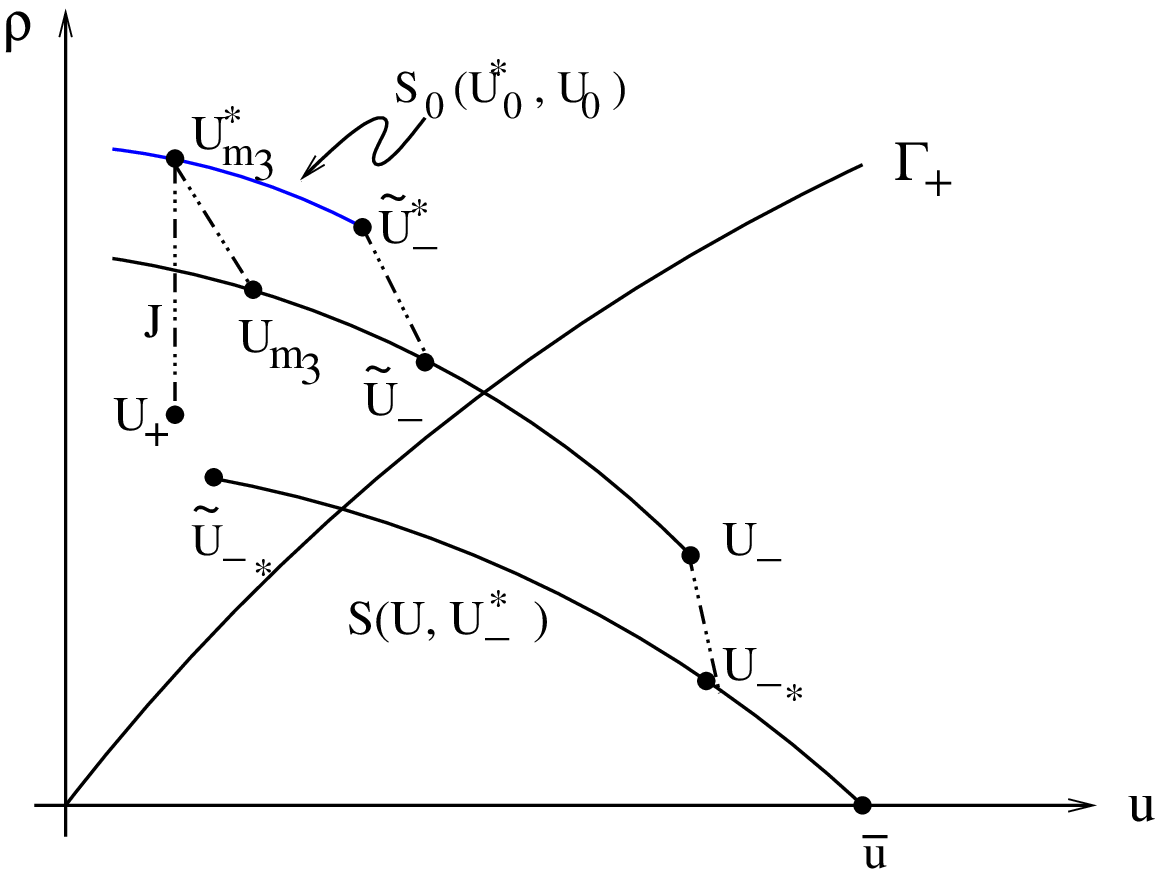}
\end{minipage}
}
\subfigure{
\begin{minipage}[t]{0.35\textwidth}
\centering
\includegraphics[width=0.9\textwidth]{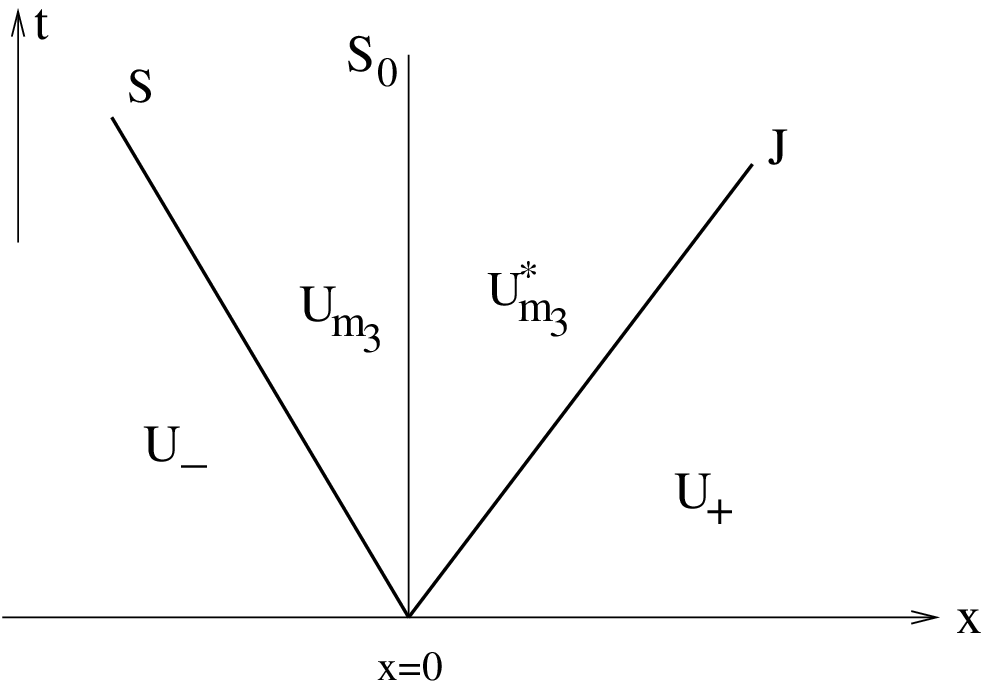}
\end{minipage}
}
\caption*{Fig.3.3. Case 1.4. $U_-\in D_2$, $0<u_+<\tilde{u}_{-*}$.}
\end{figure}

\begin{figure}[htbp]
\begin{minipage}[t]{0.45\textwidth}
\centering
\includegraphics[width=\textwidth]{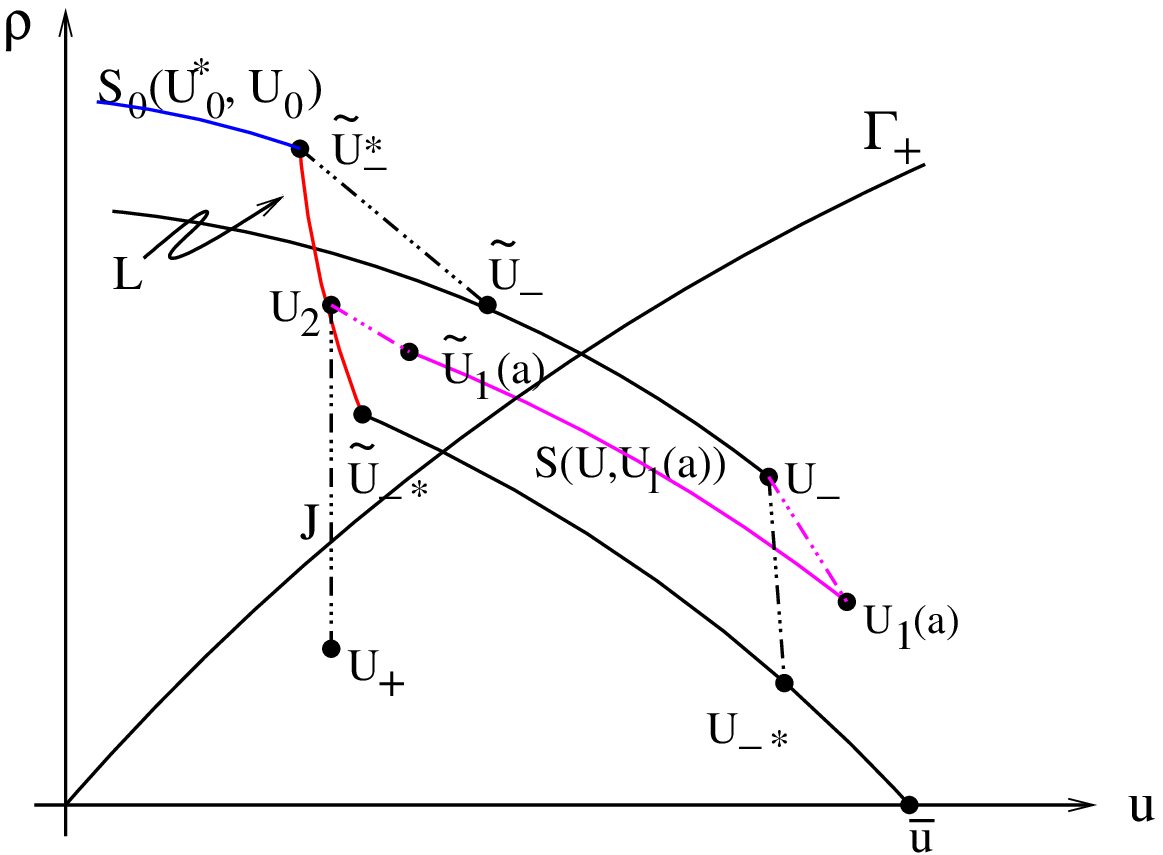}
\end{minipage}
\caption*{Fig.3.4. Three waves with the same zero speed.}
\end{figure}

\noindent%
{\bf Case 1.5.} From lemma \ref{B1}, we have $\tilde{u}_-^{*}<\tilde{u}_{-*}$ as $a_1>a_0$. This case holds when $\tilde{u}_-^{*}<u_+<\tilde{u}_{-*}$. We consider a solution containing three waves with the same zero speed.  See Fig.3.4. First $U_-$ jumps to $U_1(a)$ by stationary wave $S_0(U,U_-)$ with $a$ shifting from $a_0$ to an intermediate cross-section area $a\in[a_0,a_1]$, then $U_1(a)$ jumps to $\widetilde{U}_1(a)\in D_1$ by $S(\widetilde{U}_1(a),U_1(a))$ with $\sigma(\widetilde{U}_1(a),U_1(a))=0$. Finally $\widetilde{U}_1(a)$ jumps to $U_2$ by a stationary wave to shift the cross-section area $a$ to $a_1$.
Set
$
L=\{U(a)|a\in [a_0,a_1]\},
$
whenever
$$
J(U_+,U)\cap L\neq \emptyset,
$$
the solution contains three discontinuities having the same zero speed
$$
S_0(U_1(a),U_-)\oplus S(\widetilde{U}_1(a),U_1(a)) \oplus S_0(U_2,\widetilde{U}_1(a)) \oplus J(U_+,U_2).
$$

\noindent%
{\bf Remark:} When $a_1<a_0$, we have $\tilde{u}_-^{*}>\tilde{u}_{_*}$ in view of lemma \ref{A1}. Thus the curve $S_0(U_0^{*},U_0)\cup L\cup S(U,U_{-*})$ is folding.  There are at most three solutions when $\tilde{u}_-^{*}>u_+>\tilde{u}_{_*}$. Since the solution with standing wave is unstable in this case \cite{Liu}, we still have two solutions. How to select the physical meaning solution needs further research.

\noindent%
{\bf Case 2.} $U_-\in D_1$. In this case, both the rarefaction wave and shock wave have negative speed. First we define $\{U_C\}=R(U,U_-)\cap \Gamma_+$. A stationary wave jumps from $U_C$ to ${U_{C}}_*\in D_2$.
We draw the curve $R(U,U_{C*})$, which intersects with u-axis at $\overline{U}=(\bar{u},0)$. Similarly from lemma 3.5, we have $\widetilde{U}_{C}^{*}\in S(U,U_{C*})$ at which the shock speed vanishes. See Fig.3.5. We discuss them as follows.

\begin{figure}[htbp]
\begin{minipage}[t]{0.45\textwidth}
\centering
\includegraphics[width=\textwidth]{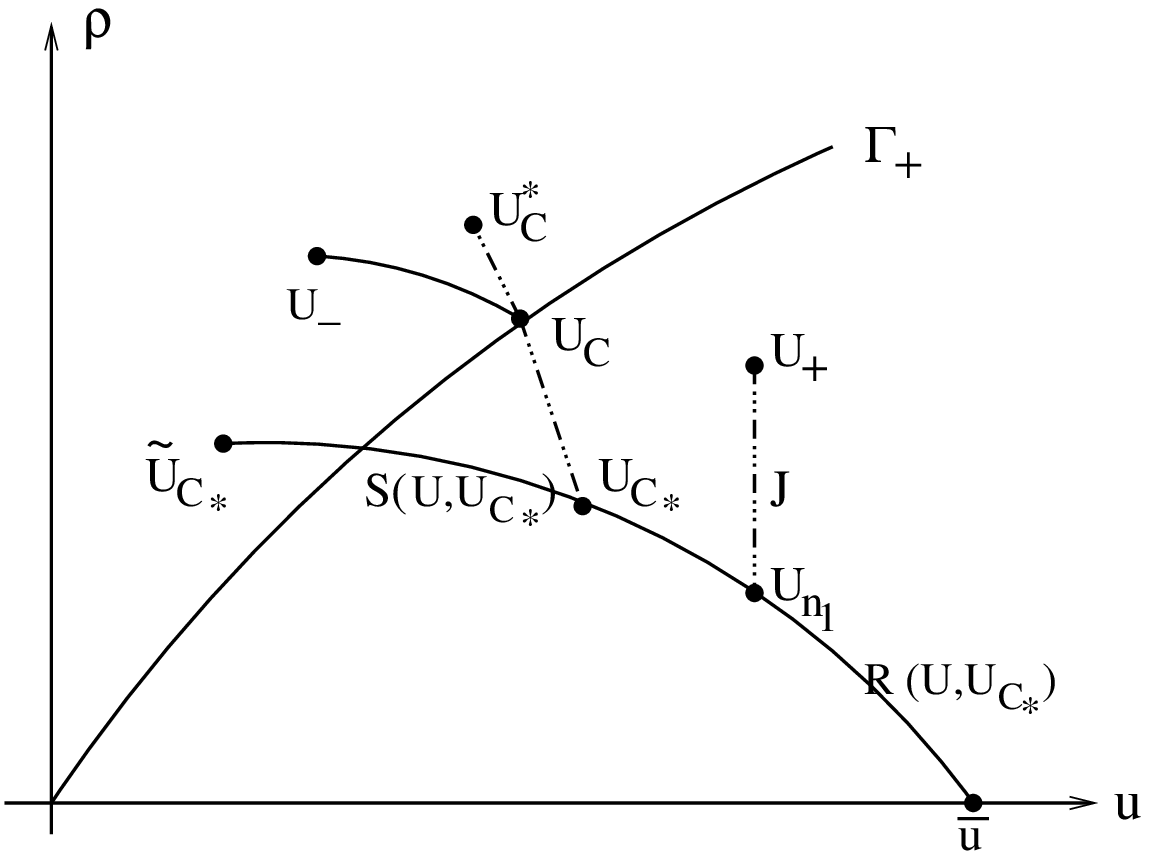}
\end{minipage}
\caption*{Fig.3.5. Case 2. $U_-\in D_1$.}
\end{figure}

\begin{figure}[htbp]
\subfigure{
\begin{minipage}[t]{0.35\textwidth}
\centering
\includegraphics[width=\textwidth]{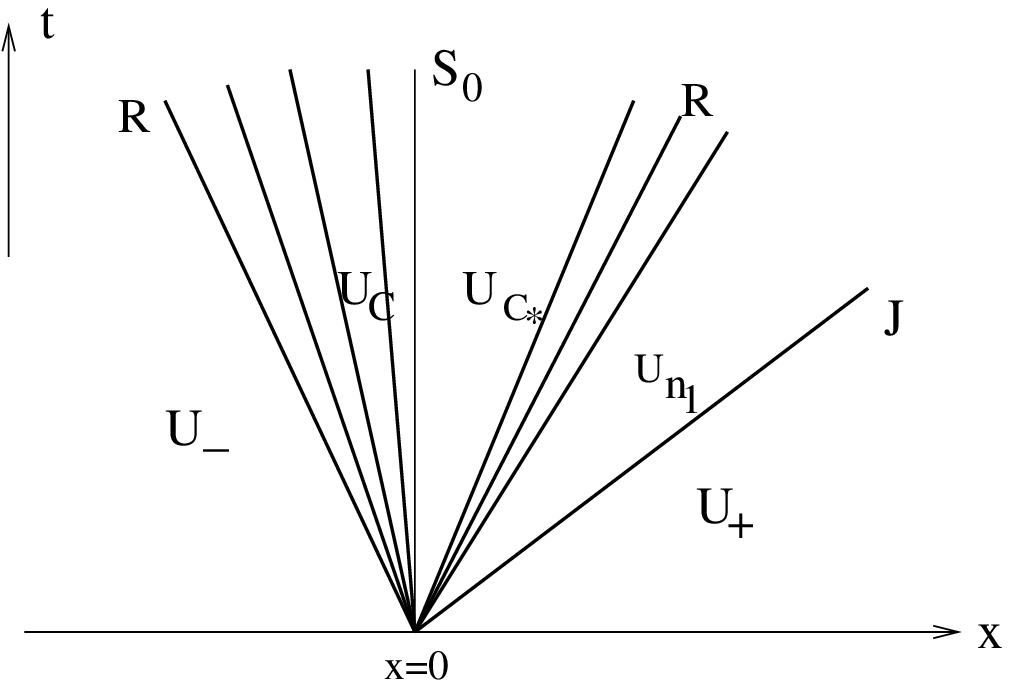}
\end{minipage}
}
\subfigure{
\begin{minipage}[t]{0.35\textwidth}
\centering
\includegraphics[width=\textwidth]{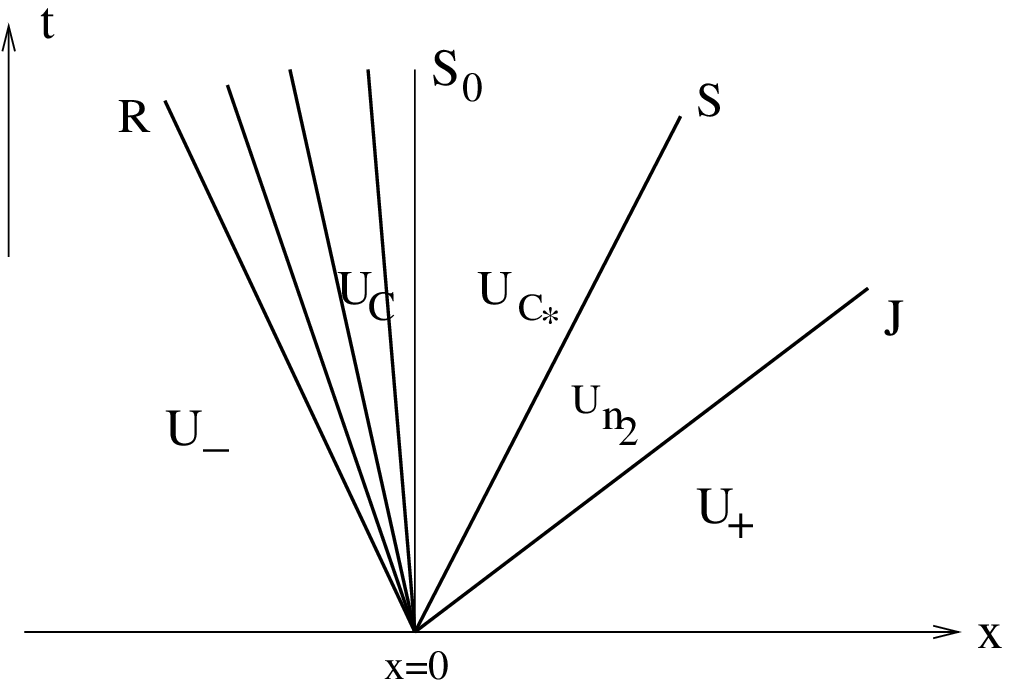}
\end{minipage}
}
\caption*{Fig.3.6. The solutions are $R\oplus S_0\oplus R({\rm or} S)\oplus J$.}
\end{figure}

\noindent%
{\bf Case 2.1.} $u_{c_*}<u_+<\bar{u}$. Denote $J(U_+,U)\cap R(U,U_{c_*})=\{U_{n_1}\}$, see Fig.3.6 (left). We have the Riemann solution as
$$
R(U_C,U_-)\oplus S_0(U_{C*},U_C)\oplus R(U_{n_1},U_{C*})\oplus J(U_+,U_{n_1}).
$$

\noindent%
{\bf Case 2.2.} $\tilde{u}_{c}^{*}<u_+<u_{c_*}$. Denote $J(U_+,U)\cap S(U,U_{-*})=\{U_{n_2}\}$, see Fig.3.6 (right). the Riemann solution is
$$
R(U_C,U_-)\oplus S_0(U_{C*},U_C)\oplus S(U_{n_2},U_{C*})\oplus J(U_+,U_{n_2}).
$$

\noindent%
{\bf Case 2.3.} $u_+>\bar{u}$, there exists a vacuum in this case. Let $J(U_+,U)\cap\{\rho=0\}=\{M\}$, then the Riemann solution is
$$
R(U_C,U_-)\oplus S_0(U_{C*},U_C)\oplus R(\overline{U},U_{C*})\oplus {\rm{Vacuum}} \oplus J(U_+,M).
$$

\noindent%
{\bf Case 2.4.} This construction holds when $U_-$ passes through a shock wave first. See Fig.3.7. Let $U_C^{*}\in D_1$ which is jumped by $U_C$ with stationary wave.  From any point $U_0\in S(U,U_-)\cup R(U,U_-), \rho_0>\rho_c$, a stationary wave jumps from $U_0$ to some state $U_0^{*}\in D_1$, such states form the curve $S_0(U_0^{*},U_0)$.

\noindent%
1) If $0<u_+<u_-$, see Fig.3.7. Denote $J(U_+,U)\cap S_0(U_0^{*},U_0)=\{U_{k_1}^{*}\}$, $U_{k_1}^{*}$ is jumped by stationary wave $S_0(U_{k_1}^{*},U_{k_1})$, where$U_{k_1}\in S(U,U_-)$. The Riemann solution in this case is (see Fig.3.8(left))
$$
S(U_{k_1},U_-)\oplus S_0(U_{k_1}^{*},U_{k_1}) \oplus J(U_+,U_{k_1}^{*}).
$$
2) If $u_-<u_+<u_c$. Denote $J(U_+,U)\cap S_0(U_0^{*},U_0)=\{U_{k_2}^{*}\}$, $U_{k_2}^{*}$ is jumped by stationary wave $S_0(U_{k_2}^{*},U_{k_2})$, where  $U_{k_2}\in R(U,U_-)$. The Riemann solution in this case is (see Fig.3.8(right))
$$
R(U_{k_2},U_-)\oplus S_0(U_{k_2}^{*},U_{k_2}) \oplus J(U_+,U_{k_2}^{*}).
$$

\begin{figure}[htbp]
\begin{minipage}[t]{0.45\textwidth}
\centering
\includegraphics[width=\textwidth]{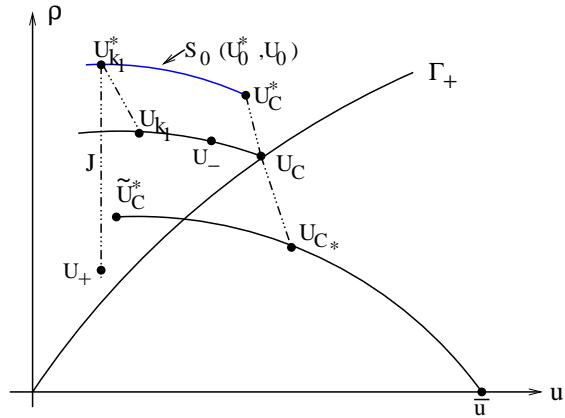}
\end{minipage}
\caption*{Fig.3.7. Case 2.4. $U_-\in D_1$.}
\end{figure}

\begin{figure}[htbp]
\subfigure{
\begin{minipage}[t]{0.35\textwidth}
\centering
\includegraphics[width=\textwidth]{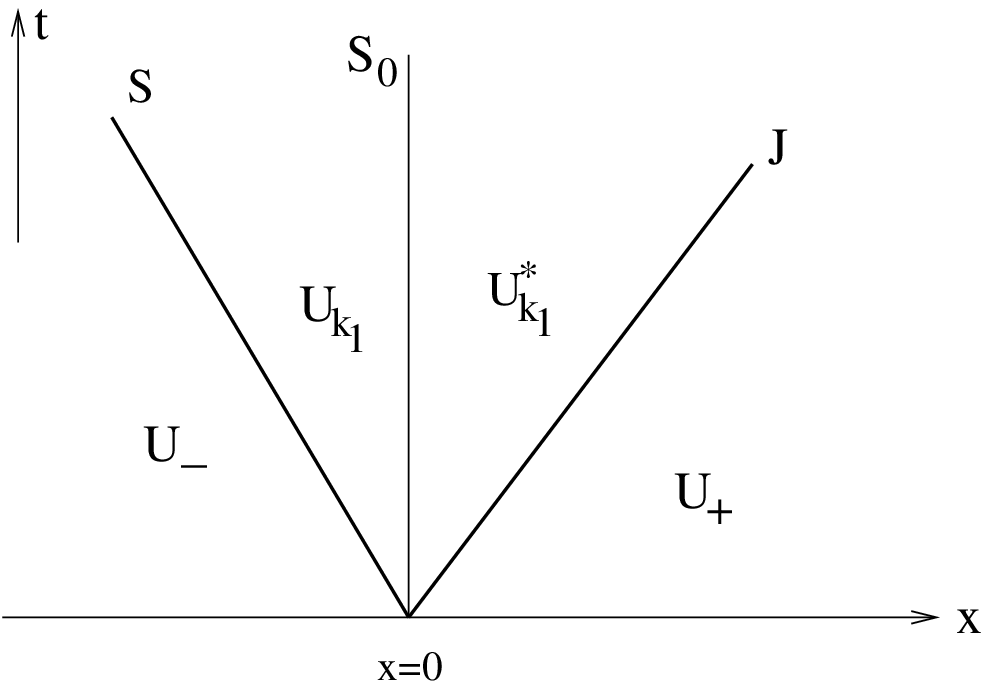}
\end{minipage}
}
\subfigure{
\begin{minipage}[t]{0.35\textwidth}
\centering
\includegraphics[width=\textwidth]{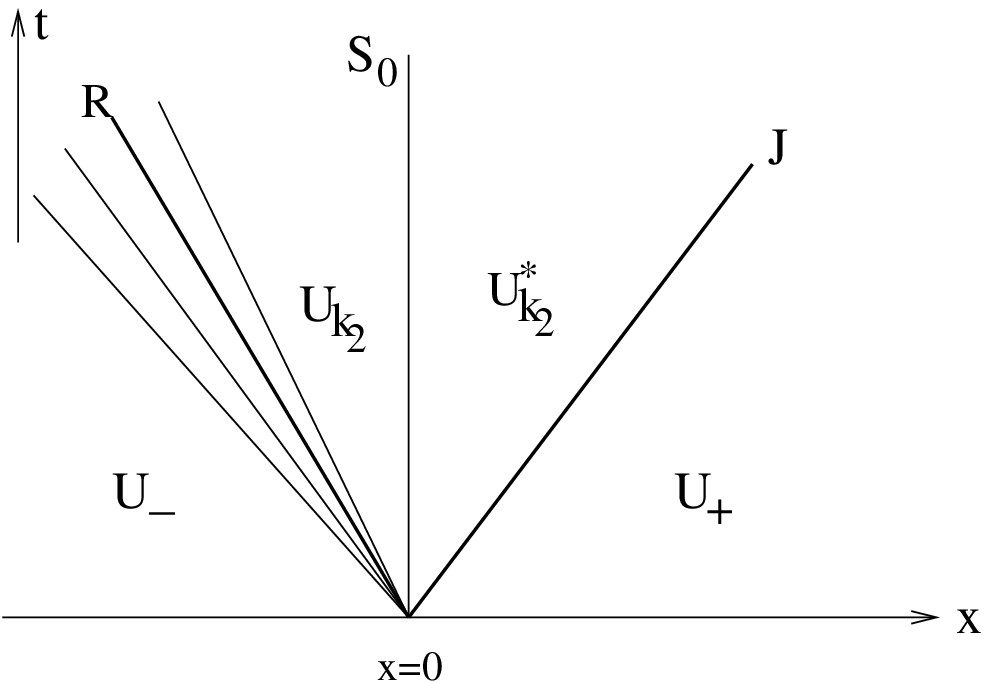}
\end{minipage}
}
\caption*{Fig.3.8. The solutions are $R({\rm or} S)\oplus S_0\oplus J$.}
\end{figure}

\begin{figure}[htbp]
\begin{minipage}[t]{0.45\textwidth}
\centering
\includegraphics[width=\textwidth]{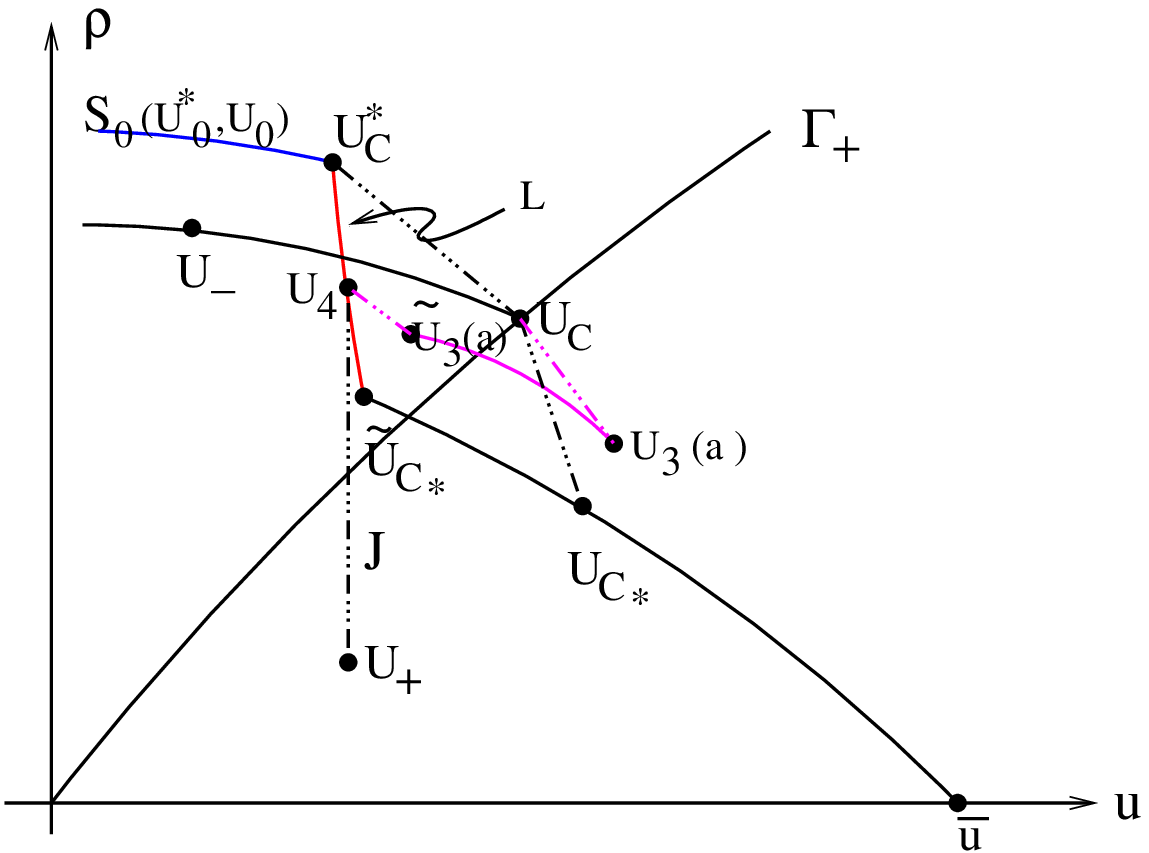}
\end{minipage}
\caption*{Fig.3.9. Three waves with the same zero speed.}
\end{figure}

\noindent%
{\bf Case 2.5.}  We have $u_c^{*}<\tilde{u}_{c*}$ as $a_1>a_0$ from lemma A. This case holds when $u_c^{*}<u_+<\tilde{u}_{c*}$. We consider a solution containing three waves with the same zero speed.   See Fig.3.9. First $U_-$ touches to $U_C$ by rarefaction wave $R(U_C,U_-)$, followed by a stationary wave $S_0(U_3(a),U_C)$ with $a$ shifting from $a_0$ to an intermediate cross-section area $a\in[a_0,a_1]$, then $U_3(a)$ jumps to $\widetilde{U}_3(a)\in D_1$ by $S(\widetilde{U}_3(a),U_3(a))$ with $\sigma(\widetilde{U}_3(a),U_3(a))=0$. Finally $\widetilde{U}_3(a)$ jumps to $U_4$ by a stationary wave to shift the cross-section area $a$ to $a_1$.
Set
$
L=\{U(a)|a\in [a_0,a_1]\},
$
whenever
$$
J(U_+,U)\cap L\neq \emptyset,
$$
the solution contains three discontinuities having the same zero speed
$$
R(U_C,U_-)\oplus S_0(U_3(a),U_C)\oplus S(\widetilde{U}_3(a),U_3(a)) \oplus S_0(U_4,\widetilde{U}_3(a)) \oplus J(U_+,U_4).
$$

\section{The Numerical Simulations}
In this section, we give some numerical simulations of the solutions for system \eqref{1.8}. The discrete scheme we use here is the Lax-Friedrichs scheme. The time step $\triangle t$ and the mesh size $\triangle x$ are given as uniform. Setting $x_j=j\triangle x, j\in Z$, and $t_n=n\triangle t, n\in N$. Define
$$
\lambda=\frac{\triangle t}{\triangle x},
$$
we have the standard difference scheme for \eqref{1.8}
\begin{equation}\label{4.1}
U_j^{n+1}=U_j^{n}-\lambda\big(g(U_j^{n},U_{j+1}^{n})-g(U_{j-1}^{n},U_{j}^{n})\big)+\frac{\lambda}2\big(0,(\rho up)_j^{n}(a_{j+1}-a_{j-1})\big),
\end{equation}
where $U=(u,\rho,a)$, $g(U,V)$ and $f(U)$ are defined by
\begin{equation}\label{4.2}
\left\{\begin{array}{ll}
\displaystyle g(U,V):=\frac12\big(f(U)+f(V)\big)-\frac1{2\lambda}\big(V-U\big),\\[6pt]
f(U):=\big(a\rho, a\rho (u+p)\big).
\end{array}\right.
\end{equation}
The constant $\lambda$ should satisfy the CFL stability condition
\begin{equation}\label{4.3}
\lambda~\max \limits_{U} \big|f'(U)\big| \leq1.
\end{equation}
Before given the numerical examples, we notice that in traffic flow problems, velocity $u$ is a decreasing function of density $\rho$. It describes that the vehicles will accelerate when the density is low, and vice versa. The computation domain we set is $[0,20]$. The CFL number is 0.5 in the following simulations. We use 2000 grid points. $a(x)$ is jumping from $a_0$ to $a_1$ in each case. Recall that $U=(u,\rho,a)$. In the first two examples, we consider the initial value problem \eqref{4.1'}. i.e., the widths change continuously. While in the other examples, we mainly consider the Riemann problem \eqref{2.5}.

Although  the angle $\theta={\rm arctan}\frac{a_1-a_0}{x_2-x_1}$ (see Fig.1.1) has the limit value $\pi/2$ in the jumping case, we note that our model is still meaningful by solving Riemann problems. Since the expression of stationary wave \eqref{2.12} still holds when we take the limit $x_2\rightarrow x_1$. Similar results can be seen in \cite{Andrianov,LeflochThanh11} for duct flows.

\noindent%
{\bf Example 1.} (see Fig. 5.1) The initial data is given as
\begin{equation}\label{5.1-1}
U(x,0)=\left\{\begin{array}{ll}
U_-=(1.5,0.5,2.0),\quad 0<x<3,\\[6pt]
U_0(x)=(1.5,0.5,a(x)),\quad 3<x<8,\\[6pt]
U_+=(0.75,1.0,3.0), \quad  8<x<20.
\end{array}\right.
\end{equation}
Here $a(x)$ is 
\begin{equation}\label{5.1-2}
a(x)=\left\{\begin{array}{ll}
2.0,\quad 0<x<3,\\[6pt]
0.2\cdot x+1.4,\quad 3<x<8,\\[6pt]
3.0, \quad 8<x<20.
\end{array}\right.
\end{equation}

\begin{figure}[htbp]
\begin{minipage}[t]{\textwidth}
\centering
\includegraphics[width=\textwidth]{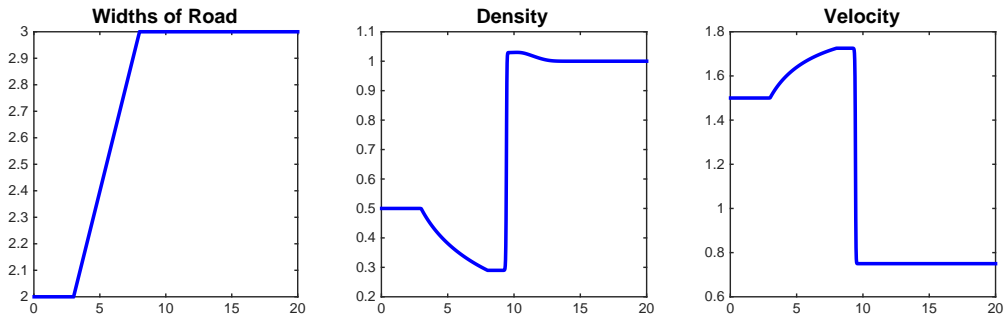}
\end{minipage}
\caption*{Fig. 5.1. $S_0\oplus S\oplus J$.}
\end{figure}

The numerical result is shown at time $t=6.0s$ in Fig. 5.1. The solution begins with a stationary wave $S_0$ in the domain $[3,8]$, followed by a shock wave $S$, then followed by a contact discontinuity $J$. It is the same as in the case 1.1.

\noindent%
{\bf Example 2.} (see Fig. 5.2) The initial data is given as
\begin{equation}\label{5.1-3}
U(x,0)=\left\{\begin{array}{ll}
U_-=(3.0,1.0,2.0),\quad 0<x<3,\\[6pt]
U_0(x)=(3.0,1.0,a(x)),\quad 3<x<8,\\[6pt]
U_+=(1.365,1.65,2.5), \quad  8<x<20.
\end{array}\right.
\end{equation}
Here $a(x)$ is 
\begin{equation}\label{5.1-4}
a(x)=\left\{\begin{array}{ll}
2.0,\quad 0<x<3,\\[6pt]
0.1\cdot x+1.7,\quad 3<x<8,\\[6pt]
2.5, \quad 8<x<20.
\end{array}\right.
\end{equation}

\begin{figure}[htbp]
\begin{minipage}[t]{\textwidth}
\centering
\includegraphics[width=\textwidth]{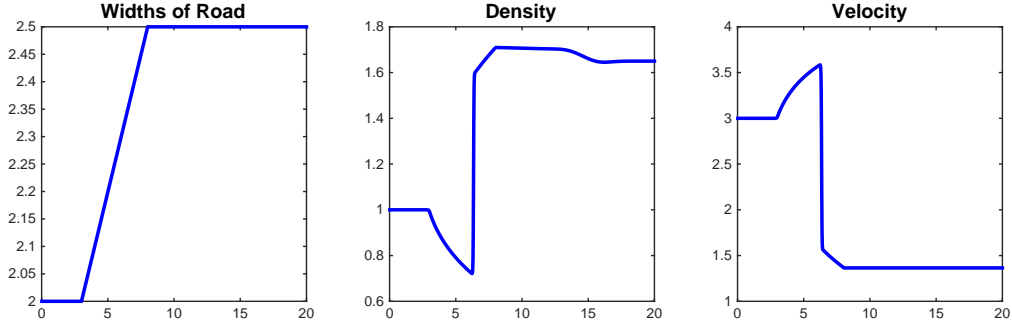}
\end{minipage}
\caption*{Fig. 5.2. $S_0\oplus S\oplus S_0\oplus J$.}
\end{figure}

The numerical result is shown at time $t=6.0s$ in Fig. 5.2. The solution includes three waves with the same zero speed. We have the a stationary wave $S_0$, followed by a standing shock wave $S$, followed by another stationary wave, then followed by a contact discontinuity $J$. It is the same as in the case 1.5.

\noindent%
{\bf Example 3.} (see Fig. 5.3) The initial data is given as

\begin{equation}\label{5.3-1}
U(x,0)=\left\{\begin{array}{ll}
U_-=(1.5, 0.5, 2.0)\in D_2,\quad 0<x<6,\\[6pt]
U_+=(0.75,1.0,  3.0)\in D_1, \quad  6<x<20,
\end{array}\right.\qquad \gamma=2.0.
\end{equation}

The numerical result is shown at time $t=7.5s$ in Fig. 5.3. The solution begins with a stationary wave $S_0$, followed by a shock wave $S$, then followed by a contact discontinuity $J$. Compared this case with example 1, we can see that the stationary jump of Riemann problem is exactly the limit of the standing wave in example 1.

\begin{figure}[htbp]
\begin{minipage}[t]{\textwidth}
\centering
\includegraphics[width=\textwidth]{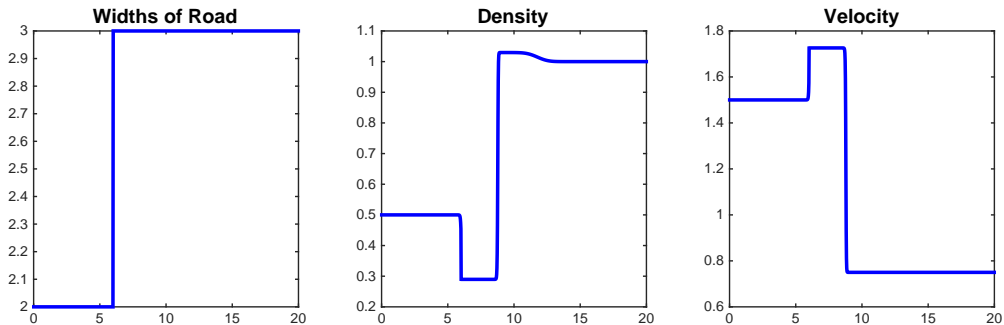}
\end{minipage}
\caption*{Fig. 5.3. $S_0\oplus S\oplus J$.}
\end{figure}

\noindent%
{\bf Example 4.} (see Fig. 5.4)
The initial data is given as

\begin{equation}\label{5.4-1}
U(x,0)=\left\{\begin{array}{ll}
U_-=(3.0,0.75,  2.0)\in D_2,\quad 0<x<6,\\[6pt]
U_+=(8.0, 0.5, 3.5)\in D_2, \quad  6<x<20,
\end{array}\right.\qquad \gamma=3.25.
\end{equation}

The numerical result is shown at time $t=1.0s$ in Fig. 5.4. The solution begins with a stationary wave $S_0$, followed by a rarefaction wave $R$, and then a contact discontinuity $J$. It is the same as in the case 1.2.

\begin{figure}[htbp]
\begin{minipage}[t]{\textwidth}
\centering
\includegraphics[width=\textwidth]{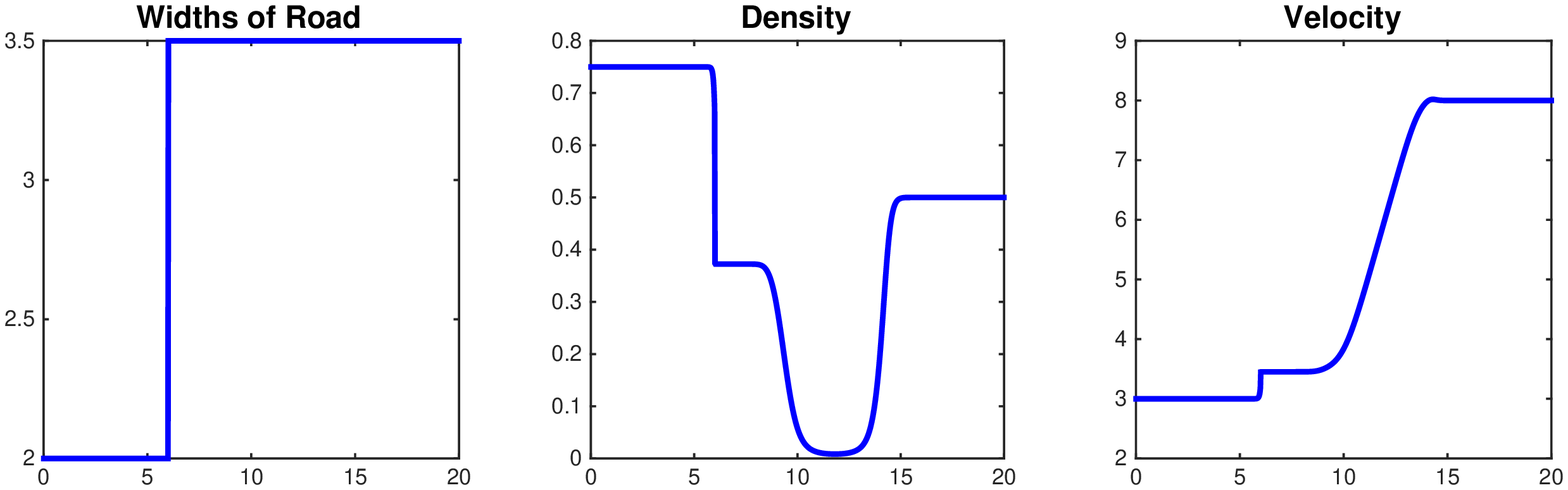}
\end{minipage}
\caption*{Fig. 5.4. $S_0\oplus R\oplus J$.}
\end{figure}

\noindent%
{\bf Example 5.} (see Fig. 5.5)
The initial data is given as

\begin{equation}\label{5.5-1}
U(x,0)=\left\{\begin{array}{ll}
U_-=(4.0,1.0,  2.0)\in D_2,\quad 0<x<10,\\[6pt]
U_+=(0.6,3.0,  2.5)\in D_1, \quad  10<x<20,
\end{array}\right.\qquad \gamma=1.5.
\end{equation}

The numerical result is shown at time $t=7.5s$ in Fig. 5.5. The solution begins with a shock wave $S$, followed by a stationary wave $S_0$, and then a contact discontinuity $J$. The result is the same as in the case 1.4.

\begin{figure}[htbp]
\begin{minipage}[t]{\textwidth}
\centering
\includegraphics[width=\textwidth]{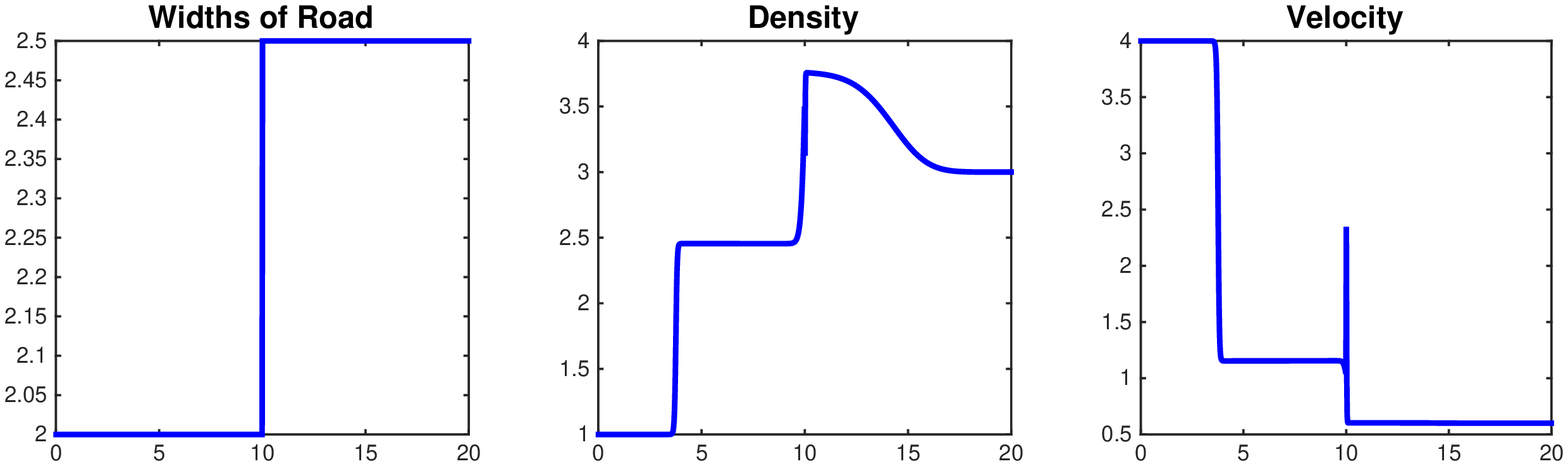}
\end{minipage}
\caption*{Fig. 5.5. $S\oplus S_0\oplus J$.}
\end{figure}

\noindent%
{\bf Example 6.} (see Fig. 5.6)
The initial data is given as

\begin{equation}\label{5.6-1}
U(x,0)=\left\{\begin{array}{ll}
U_-=(1.0,2.0,  2.0)\in D_1,\quad 0<x<10,\\[6pt]
U_+=(1.25,2.0, 2.5)\in D_1, \quad  10<x<20,
\end{array}\right.\qquad \gamma=1.75.
\end{equation}

The numerical result is shown at time $t=1.6s$ in Fig. 5.6. The solution begins with a rarefaction wave $R$, followed by a stationary wave $S_0$, and then a contact discontinuity $J$. The result is the same as in the case 2.4.

\begin{figure}[htbp]
\begin{minipage}[t]{\textwidth}
\centering
\includegraphics[width=\textwidth]{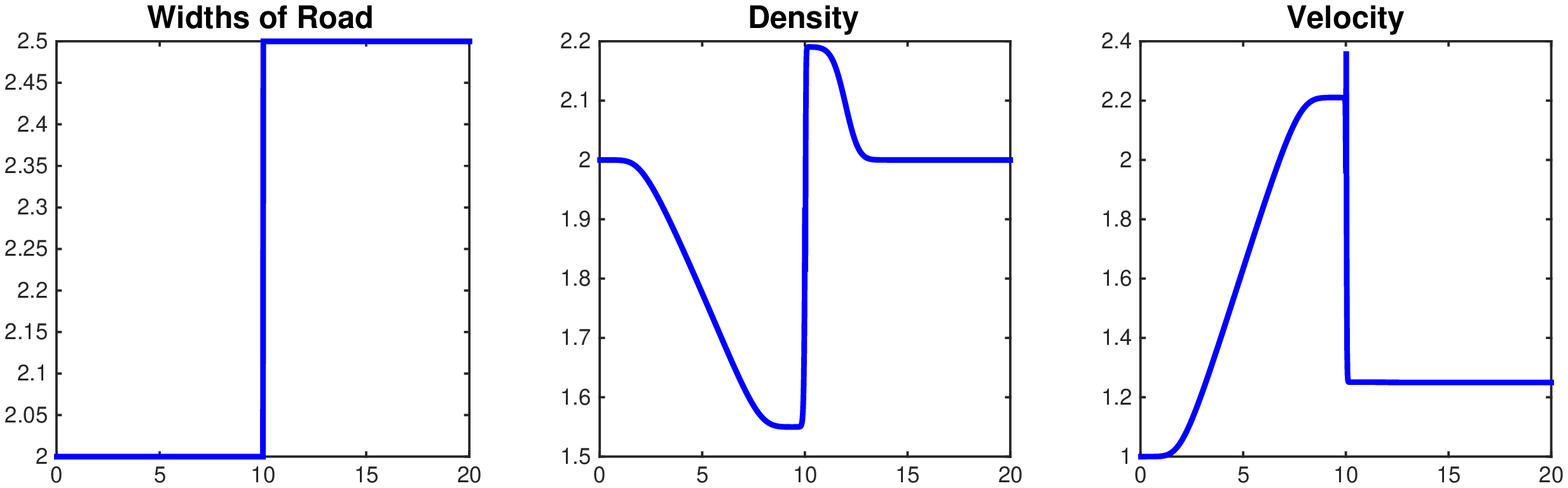}
\end{minipage}
\caption*{Fig. 5.6. $R\oplus S_0\oplus J$.}
\end{figure}

\begin{figure}[htbp]
\begin{minipage}[t]{\textwidth}
\centering
\includegraphics[width=\textwidth]{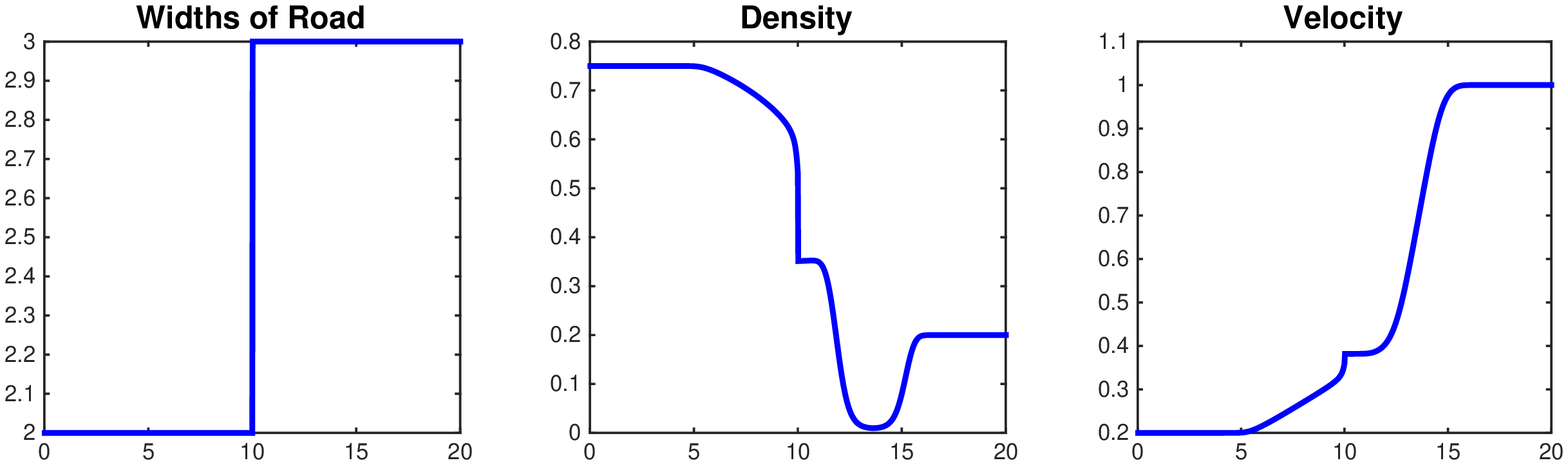}
\end{minipage}
\caption*{Fig. 5.7. $R\oplus S_0\oplus R\oplus J$.}
\end{figure}

\begin{figure}[htbp]
\begin{minipage}[t]{\textwidth}
\centering
\includegraphics[width=\textwidth]{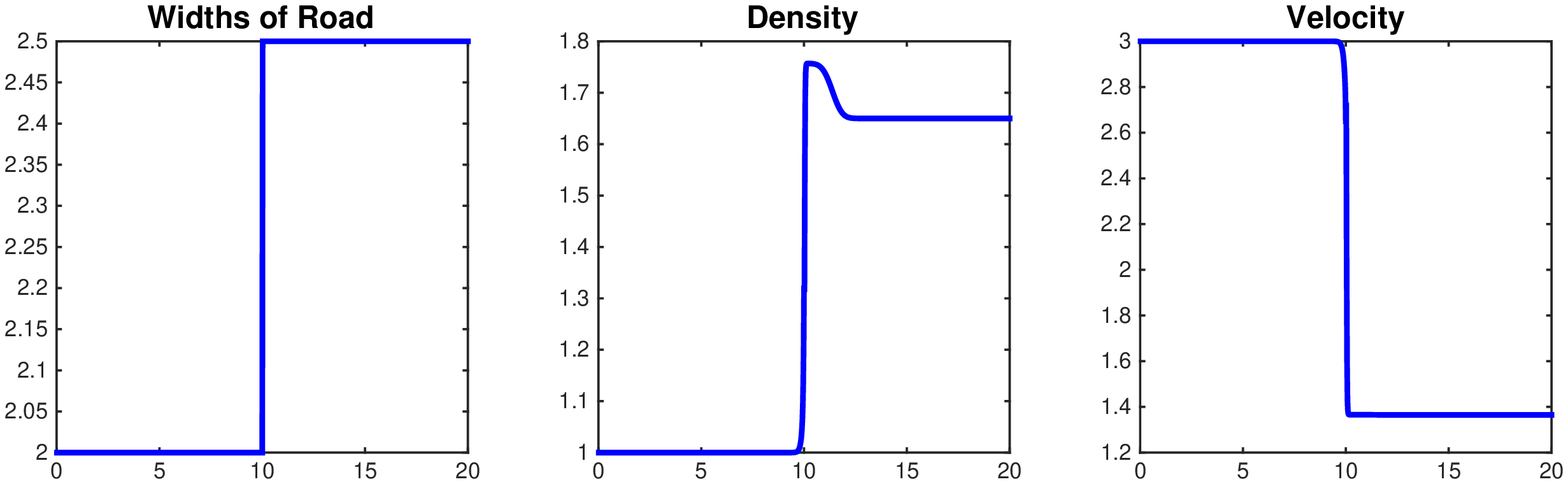}
\end{minipage}
\caption*{Fig. 5.8. $S_0\oplus S \oplus S_0\oplus J$.}
\end{figure}

\noindent%
{\bf Example 7.} (see Fig. 5.7)
The initial data is given as

\begin{equation}\label{5.7-1}
U(x,0)=\left\{\begin{array}{ll}
U_-=(0.2,0.75,  2.0)\in D_1,\quad 0<x<10,\\[6pt]
U_+=(1.0, 0.2, 3.0)\in D_2, \quad  10<x<20,
\end{array}\right.\qquad \gamma=6.0.
\end{equation}

The numerical result is shown at time $t=5.0s$ in Fig. 5.7. The solution begins with a rarefaction wave $R$, which coincide with the stationary wave $S_0$, followed by a rarefaction wave $R$, and then a contact discontinuity $J$. The result is the same as in the case 2.5.

\noindent%
{\bf Example 8.} (see Fig. 5.8)
The initial data is given as

\begin{equation}\label{5.8-1}
U(x,0)=\left\{\begin{array}{ll}
U_-=(3.0,1.0, 2.0)\in D_2,\quad 0<x<10,\\[6pt]
U_+=(1.365, 1.65, 2.5)\in D_1, \quad  10<x<20,
\end{array}\right.\qquad \gamma=2.0.
\end{equation}

The numerical result is shown at time $t=1.0s$ in Fig. 5.8.  The solution includes three waves with the same zero speed, followed by contact discontinuity $J$. Compare this case with example 2, we can see clearly that the stationary wave of Riemann problem is the limit of the standing wave in example 2.

\begin{figure}[htbp]
\begin{minipage}[t]{\textwidth}
\centering
\includegraphics[width=\textwidth]{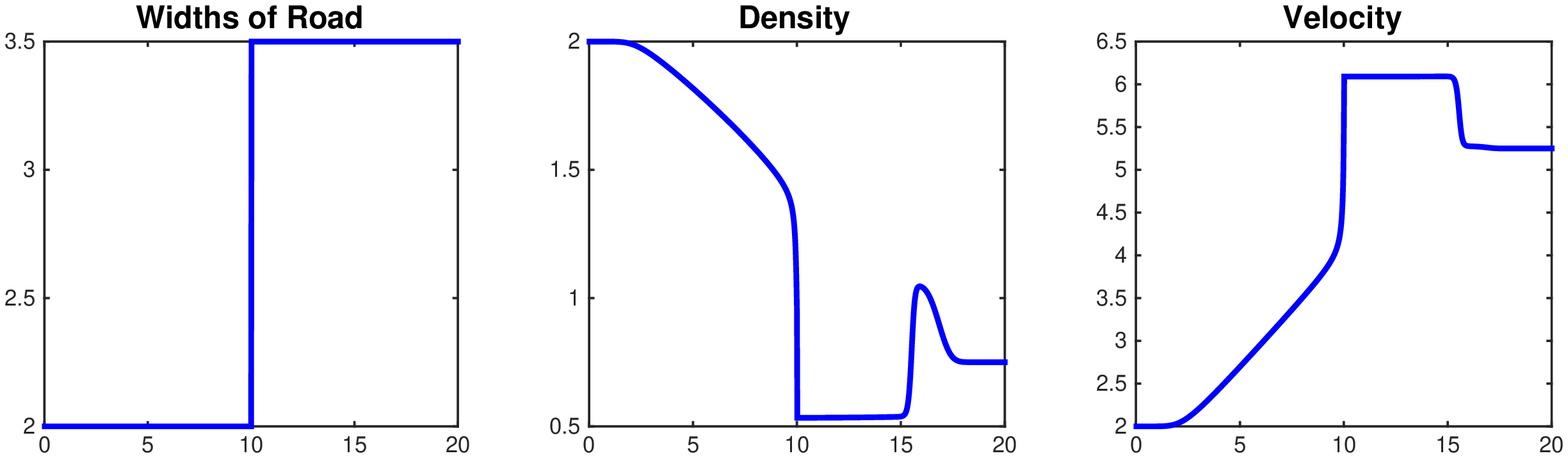}
\end{minipage}
\caption*{Fig. 5.9. $R\oplus S_0\oplus S\oplus J$.}
\end{figure}

\noindent%
{\bf Example 9.} (see Fig. 5.9)
The initial data is given as

\begin{equation}\label{5.9-1}
U(x,0)=\left\{\begin{array}{ll}
U_-=(2.0, 2.0, 2.0)\in D_1,\quad 0<x<10,\\[6pt]
U_+=(5.25, 0.75, 3.5)\in D_2, \quad  10<x<20,
\end{array}\right.\qquad \gamma=2.0.
\end{equation}

The numerical result is shown at time $t=1.25s$ in Fig. 5.9. The solution begins with a rarefaction wave $R$, which is coincide with the stationary wave $S_0$, followed by a shock wave $S$,  and then contact discontinuity $J$. The result is the same as in the case 2.2.

In summary, we establish a model which describes the traffic flow on a road with variable widths. The new model is based on the Aw-Rascle model where the pressure is regarded as the velocity offset. Then we construct the Riemann solutions. To ensure the uniqueness of the steady wave, we impose the global entropy condition on the road widths $a(x)$. The uniqueness and nonuniqueness of the Riemann solution is also discussed. Finally we give some numerical results which are corresponding to the analytical solutions.

\

\appendix

\section{The relative position of $\widetilde{U}_-$ and $\widetilde{U}_{-*}$}

\begin{lem}\label{A1}

Assume that $U_-\in D_2$, $U_{-*}\in S_0(U,U_-)$. From lemma 3.5, there exist $\widetilde{U}_-\in S(U,U_-)$ and $\widetilde{U}_{-*}\in S(U,U_{-*})$, at which both the shock have zero speed. Denote the curves $\widetilde{\Gamma}_-: \tilde{u}_-=c_1\gamma \tilde{\rho}_-^{\gamma}$ and $\widetilde{\Gamma}_{-*}: \tilde{u}_{-*}=c_2\gamma \tilde{\rho}_{-*}^{\gamma}$.
We conclude that $c_1>c_2$, i.e., $\widetilde{U}_-$ is "closer" to $\Gamma_+$ than $\widetilde{U}_{-*}$.

\begin{figure}[htbp]
\begin{minipage}[t]{0.45\textwidth}
\centering
\includegraphics[width=\textwidth]{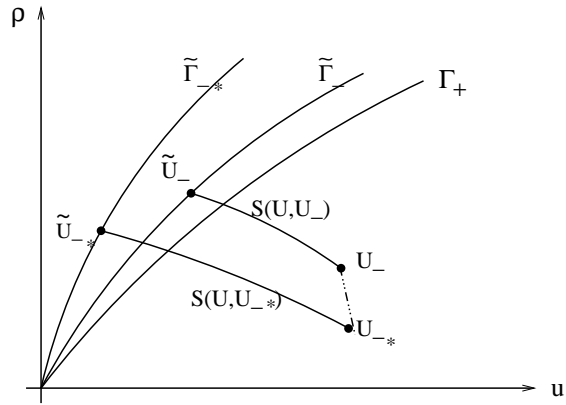}
\end{minipage}
\caption*{Fig.A. The relative position of $\widetilde{U}_-$ and $\widetilde{U}_{-*}$.}
\end{figure}
\end{lem}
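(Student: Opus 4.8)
The plan is to reduce the comparison to two scalar quantities that are conserved across a zero-speed shock, and then to track how these quantities move along the stationary wave $S_0(U,U_-)$.

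\emph{Step 1 (reduction to a single polynomial).} Both $\widetilde U_-$ and $\widetilde U_{-*}$ arise as the limiting member of a shock curve (along which $a$ stays fixed) at the state where $\sigma=0$. Imposing $\sigma=0$ in the Rankine--Hugoniot relations \eqref{2.9} forces $[\rho u]=0$ and $[\rho u(u+\rho^\gamma)]=0$, so across each such degenerate shock the quantities $q:=\rho u$ and $w:=u+\rho^\gamma$ are preserved. Eliminating $u=q/\rho$ shows that the densities of a left state $U_0\in D_2$ and its zero-speed image $\widetilde U_0$ are exactly the two positive roots of
\[ g(\rho):=\rho^{\gamma+1}-w\rho+q=0, \]
the smaller root $\rho_0$ lying in $D_2$ and the larger root $\tilde\rho_0$ lying in $D_1$, separated by $\rho_m=(w/(\gamma+1))^{1/\gamma}$, which is precisely the $\rho$-coordinate of $\Gamma_+$ on the level set $\{u+\rho^\gamma=w\}$. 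Writing $\tilde u_0=w-\tilde\rho_0^\gamma$, the closeness ratio of the image is
\[ c=\frac{\tilde u_0}{\gamma\tilde\rho_0^\gamma}=\frac{w}{\gamma}\tilde\rho_0^{-\gamma}-\frac1\gamma. \]

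\emph{Step 2 (monotonicity of $c$ in $(q,w)$).} Regard $\tilde\rho_0=\tilde\rho_0(q,w)$ as the larger root, so that $g'(\tilde\rho_0)=(\gamma+1)\tilde\rho_0^\gamma-w>0$. Implicit differentiation gives $\partial\tilde\rho_0/\partial q=-1/g'(\tilde\rho_0)<0$ and $\partial\tilde\rho_0/\partial w=\tilde\rho_0/g'(\tilde\rho_0)>0$, whence
\[ \frac{\partial c}{\partial q}=\frac{w}{\tilde\rho_0^{\gamma+1}\,g'(\tilde\rho_0)}>0,\qquad \frac{\partial c}{\partial w}=\frac{(\gamma+1)\big(\tilde\rho_0^\gamma-w\big)}{\gamma\,\tilde\rho_0^\gamma\,g'(\tilde\rho_0)}. \]
Since the image has positive velocity, $\tilde u_0=w-\tilde\rho_0^\gamma>0$, the numerator in the second expression is negative, so $c$ is strictly increasing in $q$ and strictly decreasing in $w$.

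\emph{Step 3 (motion of $(q,w)$ along $S_0$ and conclusion).} Along the stationary wave from $U_-$ (width $a_0$) to $U_{-*}$ (width $a_1>a_0$), the first relation of \eqref{2.12} gives $q=\rho u=a_0\rho_-u_-/a$, which strictly decreases as $a$ increases, so $q_{-*}<q_-$. For $w$, differentiating $w=u+\rho^\gamma$ along \eqref{2.24} and cancelling the factor $u-\gamma\rho^\gamma$ yields $dw/da=\rho^\gamma/a>0$, so $w_{-*}>w_-$. Applying Step 2 with $c_1=c(q_-,w_-)$ and $c_2=c(q_{-*},w_{-*})$: lowering $q$ and raising $w$ each decrease $c$, hence $c_2<c_1$, i.e. $c_1>c_2$, which says exactly that $\widetilde U_-$ is closer to $\Gamma_+$ than $\widetilde U_{-*}$.

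The main obstacle is Step 2, specifically the sign of $\partial c/\partial w$: it relies on identifying $\widetilde U_0$ as the larger root (so that $g'>0$) and on the positivity $\tilde\rho_0^\gamma<w$ of the image velocity; one must also ensure that $g$ genuinely has two positive roots, which is where the standing-shock existence provided by Lemma 3.5 enters. Once these structural facts are in place, the two monotonicities combine without any further case analysis.
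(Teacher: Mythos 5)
Your proof is correct, and it reaches the conclusion by a genuinely different organization of the same ingredients. The paper nondimensionalizes: writing $u_-=c\gamma\rho_-^{\gamma}$ and $x=\tilde{\rho}_-/\rho_-$, it exploits the homogeneity of the standing-shock relations to collapse the map $U_-\mapsto\widetilde{U}_-$ into a single scalar relation between $c$ and $c_1$, proves ${\rm d}c_1\cdot{\rm d}c<0$ by implicit differentiation of that relation, and finishes by noting that $c=u/(\gamma\rho^{\gamma})$ increases along $S_0$ because of \eqref{2.24}. You instead keep both invariants of the zero-speed shock, $q=\rho u$ and $w=u+\rho^{\gamma}$, identify the image density as the larger root of $\rho^{\gamma+1}-w\rho+q=0$ (the same polynomial the paper derives), establish the two monotonicities $\partial c/\partial q>0$ and $\partial c/\partial w<0$, and then check that along $S_0$ with $a$ increasing one has $q=a_0\rho_-u_-/a$ decreasing and, via the clean identity ${\rm d}w/{\rm d}a=\rho^{\gamma}/a$, $w$ increasing, so both effects push $c$ down. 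What each approach buys: the paper's scaling reduction is more economical (one parameter instead of two) but requires spotting the scale invariance and verifying the signs of two less transparent expressions by ``a direct calculation''; your two-variable bookkeeping uses the physically natural conserved quantities of the standing shock, and your sign computations are immediate (the sign of $\partial c/\partial w$ reduces to positivity of the image velocity together with $g'(\tilde{\rho}_0)>0$ at the larger root). One point worth making explicit in your Step 3: to integrate the two partial monotonicities into $c_2<c_1$ you should note that along the actual curve $a\mapsto(q(a),w(a))$, $a\in[a_0,a_1]$, the intermediate states stay in $D_2$ (by the lemma preceding the global entropy condition) so the larger root persists with $g'(\tilde{\rho}_0)>0$, and hence ${\rm d}c/{\rm d}a=c_q\,q'(a)+c_w\,w'(a)<0$ throughout; this is implicit in your argument but deserves a sentence.
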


 \begin{proof}
From the assumption, we have
\begin{equation*}
\left\{\begin{array}{ll}
\displaystyle \rho_-u_-=\tilde{\rho}_-\tilde{u}_-,\\
u_-+\rho_-^{\gamma}=\tilde{u}_-+\tilde{\rho}_-^{\gamma}.
\end{array}\right.
\end{equation*}
Substituting $\displaystyle \tilde{u}_-=\frac{\rho_-u_-}{\tilde{\rho}_-}$ into the second equation, we have
$$
\tilde{\rho}_-^{\gamma+1}-(u_-+\rho_-^{\gamma})\tilde{\rho}_-+\rho_-u_-=0,
$$
If we denote $u_-=c\gamma\rho_-^{\gamma}~(c>1)$ and $\displaystyle x=\frac{\tilde{\rho}_-}{\rho_-}$, it follows
$$
f(x):=x^{\gamma+1}-(c\gamma+1)x+c\gamma=0,
$$
$f(x)$ admits two solutions $1$ and $x_1>1$. So we verify that $\tilde{\rho}_-=x_1\rho_-$. Moreover, we have $\displaystyle \tilde{u}_-=\frac{c}{x_1^{\gamma+1}} \gamma\tilde{\rho}_-^{\gamma}=c_1\gamma\tilde{\rho}_-^{\gamma}$. It leads to $\displaystyle c=c_1x_1^{\gamma+1}$. \\
Now, to prove lemma A, it is sufficient to prove that $c_1$ changes in the opposite direction with $c$. We substitute $\displaystyle c=c_1x_1^{\gamma+1}$ into $f(x)$, and get
$$
\displaystyle c^{\frac{\gamma}{\gamma+1}}-(c\gamma+1)c_1^{\frac{\gamma}{\gamma+1}}+\gamma c^{\frac{\gamma}{\gamma+1}} c_1=0.
$$
Differential the above equation, we have
$$
\displaystyle \left(\frac{\gamma}{\gamma+1} c^{-\frac1{\gamma+1}}(1+\gamma c_1)-\gamma c_1^{\frac{\gamma}{\gamma+1}}\right){\rm d}c=\left(\frac{\gamma}{\gamma+1}(c\gamma+1)c_1^{-\frac1{\gamma+1}}-\gamma c^{\frac{\gamma}{\gamma+1}}\right){\rm d}c_1.
$$
A direct calculation leads to $\left(\frac{\gamma}{\gamma+1} c^{-\frac1{\gamma+1}}(1+\gamma c_1)-\gamma c_1^{\frac{\gamma}{\gamma+1}}\right)<0$ and $\left(\frac{\gamma}{\gamma+1}(c\gamma+1)c_1^{-\frac1{\gamma+1}}-\gamma c^{\frac{\gamma}{\gamma+1}}\right)>0$. It follows ${\rm d}c_1 \cdot {\rm d}c<0$. Since $\displaystyle \frac{u_-}{\gamma\rho_-^{\gamma}}<\frac{u_{-*}}{\gamma \rho_{-*}^{\gamma}}$ from \eqref{2.24}, we have $\displaystyle \frac{\tilde{u}_-}{\gamma \tilde{\rho}_-^{\gamma}}>\frac{\tilde{u}_{-*}}{\gamma \tilde{\rho}_{-*}^{\gamma}}$, i.e., $c_1>c_2$.

\end{proof}

  \section{The monotonic property of wave curve}

\begin{lem}\label{B1}
If $U_-\in D_2$, $U_{-*}$ and $U_-^{*}$ are denoted in case 1. Assume that $U_0\in S_0(U,U_-)$ with the cross-section area shifting from $a_0$ to $a$, $U_1\in S(U,U_0)$, $U_2\in S_0(U,U_1)$ with the cross-section shifting from $a$ to $a_1$. Such states $U_2$ forms a curve $L: ~U_2(a):~~a\in [a_0,a_1]$. We conclude that $L$ is a decreasing curve. Moreover, if $a_1$ is close to $a_0$, we have\\
1). On the expand road with $a_1>a_0$, we have $\tilde{u}_-^{*}<\tilde{u}_{-*},  \tilde{\rho}_-^{*}>\tilde{\rho}_{-*}$. See Fig.A (left).\\
2). On the contract road with $a_1<a_0$, we have $\tilde{u}_-^{*}>\tilde{u}_{-*},  \tilde{\rho}_-^{*}<\tilde{\rho}_{-*}$. See Fig.A (right).

\begin{figure}[htbp]
\subfigure{
\begin{minipage}[t]{0.4\textwidth}
\centering
\includegraphics[width=\textwidth]{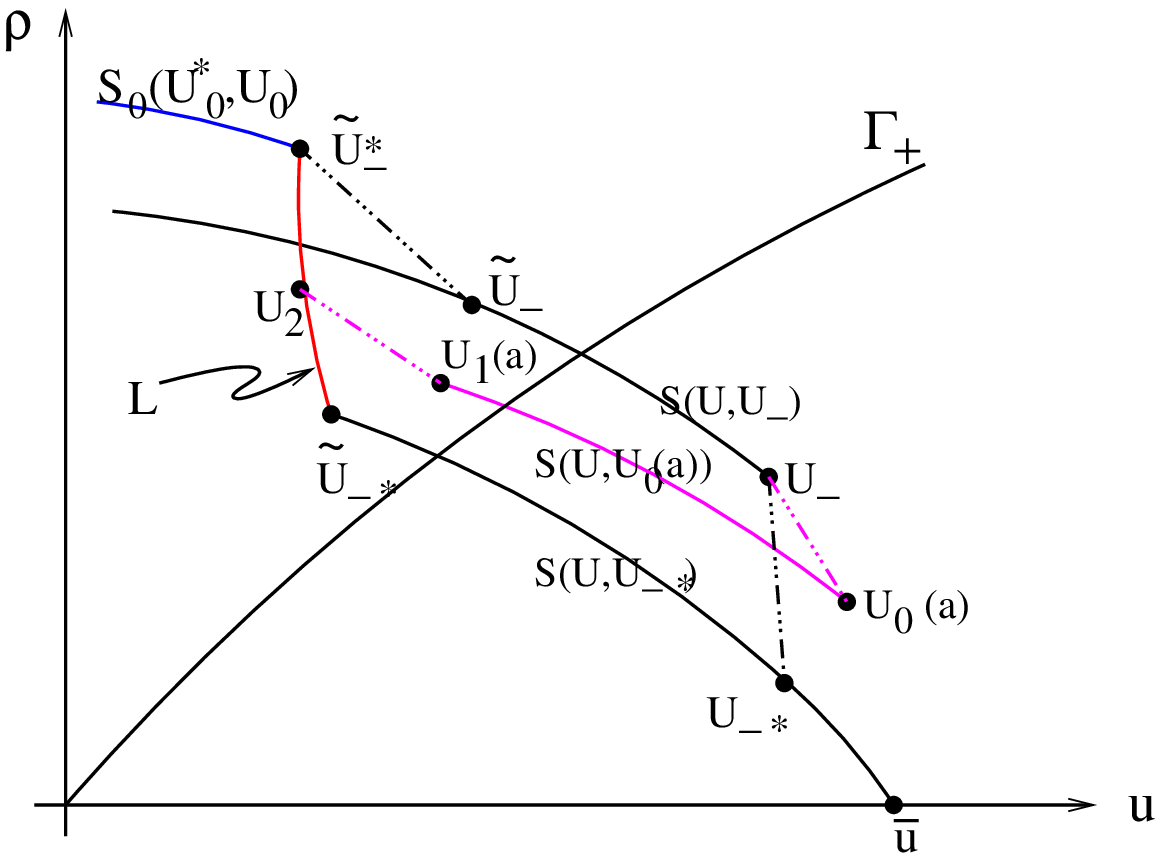}
\end{minipage}
}
\subfigure{
\begin{minipage}[t]{0.4\textwidth}
\centering
\includegraphics[width=\textwidth]{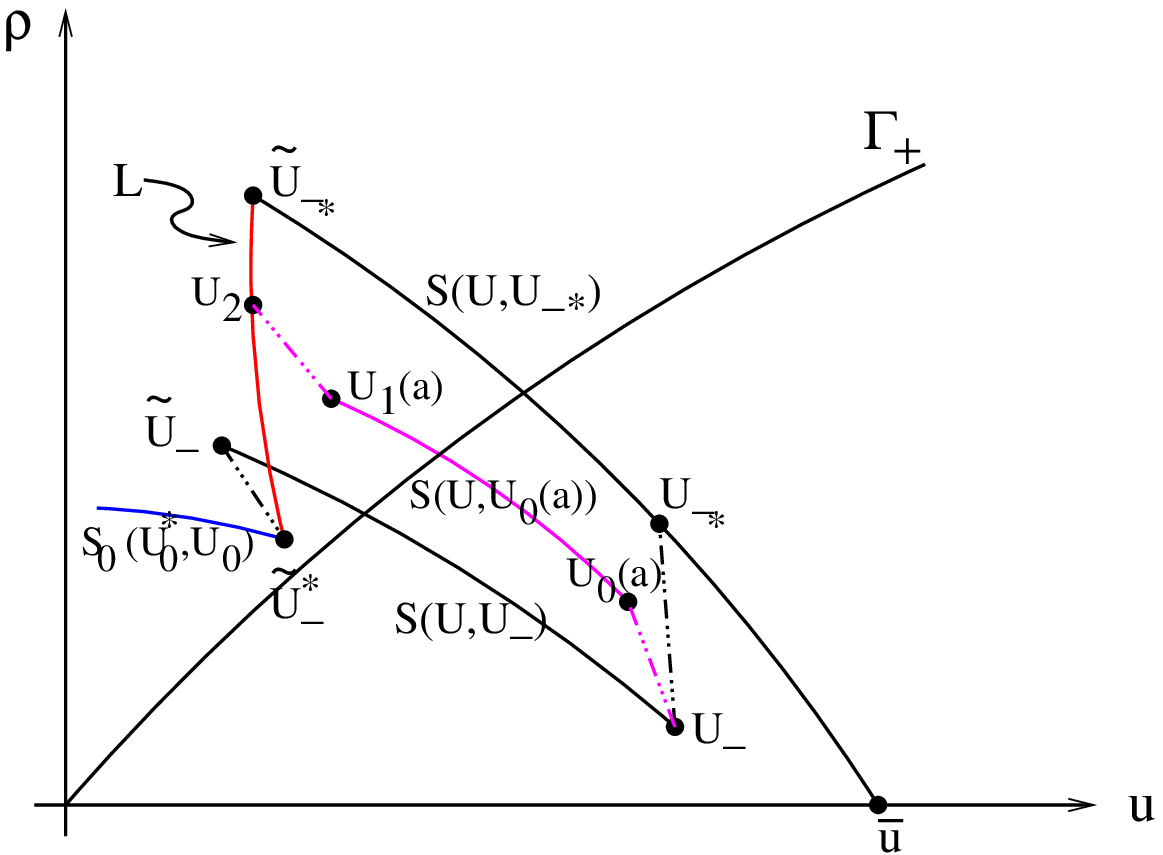}
\end{minipage}
}
\caption*{Fig. B. The curve $L$ with $a_0<a_1$ (left) and  $a_0>a_1$ (right).}
\end{figure}
\end{lem}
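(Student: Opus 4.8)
The plan is to isolate one quantity that survives all three constituent waves, which pins the curve $L$ to an explicit algebraic curve and settles the monotonicity at once, and then to reduce the two positional claims to the sign of a single derivative in $a$.

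First I would note that the mass flux $a\rho u$ is preserved across each of the three waves building $U_2(a)$: it is the first invariant of \eqref{2.12} across both stationary waves, and across the standing shock $S(U_1,U_0)$---which has zero speed and $[a]=0$---the relation $-\sigma[\rho]=[\rho u]$ forces $[\rho u]=0$ with $a$ unchanged. Chaining the three gives $a_1\rho_2u_2=a\rho_1u_1=a\rho_0u_0=a_0\rho_-u_-$, so $\rho_2u_2=M:=a_0\rho_-u_-/a_1$ is independent of $a$. Hence every point of $L$ lies on the hyperbola $\rho u=M$, which is strictly decreasing in the $(u,\rho)$-plane; this already yields that $L$ is a decreasing curve. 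Moreover the endpoints are $U_2(a_0)=\widetilde{U}_-^{*}$ (first stationary wave degenerate) and $U_2(a_1)=\widetilde{U}_{-*}$ (second stationary wave degenerate).

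Because $L$ sits on a decreasing curve, the relative position of the endpoints is decided purely by the sign of $du_2/da$. I would recover $u_2$ from the two invariants of the second stationary wave, namely $\rho_2u_2=M$ together with $H(u_2,\rho_2)=H(u_1,\rho_1)$, where $H(u,\rho)=u^{\gamma/(1+\gamma)}\bigl(\rho^\gamma+\tfrac{\gamma}{1+2\gamma}u\bigr)$ is the second invariant in \eqref{2.12}. A direct computation of $H$ restricted to the hyperbola is expected to give the clean identity
\[
\frac{dH}{du}\Big|_{\rho u=M}=\frac{\gamma}{1+\gamma}\,u^{-\frac{1}{1+\gamma}}\,(u-\gamma\rho^\gamma),
\]
which is strictly negative on $D_1$, the domain containing $U_2$ by Lemma 3.4. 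Consequently $u_2$ is a strictly decreasing function of the common value $G(a):=H(U_1(a))=H(U_2(a))$, and everything reduces to the monotonicity of $G$ in $a$.

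This last step is the crux. Along the first stationary wave the ODEs \eqref{2.24} give $\rho_0u_0=a_0\rho_-u_-/a$ and, after substitution, the tidy relation $\tfrac{d}{da}(u_0+\rho_0^\gamma)=\rho_0^\gamma/a$, so both shock invariants $w_0:=u_0+\rho_0^\gamma$ and $P_0:=\rho_0u_0$ become explicit in $a$. Representing the standing-shock image by $u_1+\rho_1^\gamma=w_0$ and $\rho_1u_1=P_0$, differentiating the constraint $(w_0-u_1)u_1^\gamma=P_0^\gamma$, and collapsing the result with the identity $\bigl(\tfrac{\gamma}{1+\gamma}+1\bigr)\tfrac{1+\gamma}{1+2\gamma}=1$, I expect everything to telescope to
\[
\frac{dG}{da}=\frac{\gamma\,\rho_0^\gamma}{(1+\gamma)\,a}\,u_1^{\frac{\gamma}{1+\gamma}-\gamma}\,(u_1^\gamma-u_0^\gamma).
\]
Since the Lax condition \eqref{2.10} forces $u_1<u_0$, this is strictly negative throughout, so with the previous paragraph $du_2/da>0$ on the whole width-interval. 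Integrating from $a_0$ to $a_1$ then gives $\tilde{u}_-^{*}<\tilde{u}_{-*}$ when $a_1>a_0$ and the reverse when $a_1<a_0$, the companion $\rho$-inequalities being immediate from $\rho u=M$. The main obstacle is precisely the bookkeeping in this step---pushing the derivative through the stationary $\to$ shock $\to$ stationary composition and reducing a large number of terms to the single factor $u_1^\gamma-u_0^\gamma$---while the hypothesis that $a_1$ be close to $a_0$ is what keeps the whole chain ($U_0\in D_2$, a genuine zero-speed shock, and $U_1,U_2\in D_1$ with no vacuum) well defined, so that the local sign of $du_2/da$ can be integrated into the stated global inequalities.
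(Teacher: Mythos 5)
Your proposal is correct and follows the same overall strategy as the paper's proof: both first observe that the mass flux survives all three waves, so that $\rho_2u_2=a_0\rho_-u_-/a_1$ is independent of $a$, the curve $L$ lies on a fixed decreasing hyperbola, and everything reduces to the sign of ${\rm d}u_2/{\rm d}a$. Where you genuinely diverge is in how that sign is extracted. The paper differentiates the three systems in \eqref{5.4} one after another and lands on the expression \eqref{5.7}, whose numerator contains the term $(\gamma+1)(a_0-a)\rho_1\rho_2^{\gamma-1}$ of indefinite sign; this is why the paper can only conclude in the limit $a\to a_0$ and must invoke the hypothesis that $a_1$ be close to $a_0$ even for the monotonicity of $u_2$. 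You instead route the derivative through the stationary-wave invariant $H(u,\rho)=u^{\gamma/(1+\gamma)}\bigl(\rho^\gamma+\tfrac{\gamma}{1+2\gamma}u\bigr)$: your identities $\tfrac{{\rm d}}{{\rm d}a}(u_0+\rho_0^\gamma)=\rho_0^\gamma/a$ and $\tfrac{{\rm d}H}{{\rm d}u}\big|_{\rho u=M}=\tfrac{\gamma}{1+\gamma}u^{-1/(1+\gamma)}(u-\gamma\rho^\gamma)$ both check out against \eqref{2.24}, and your closed form for ${\rm d}G/{\rm d}a$ equals, after using $\rho_0^\gamma u_0^\gamma=\rho_1^\gamma u_1^\gamma$, the quantity $\tfrac{\gamma}{(1+\gamma)a}u_1^{\gamma/(1+\gamma)}(\rho_0^\gamma-\rho_1^\gamma)<0$, giving ${\rm d}u_2/{\rm d}a>0$ at every $a$ for which the three-wave chain is defined, not merely near $a_0$. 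This is cleaner and strictly stronger than \eqref{5.7}; the only role left for the closeness hypothesis is to guarantee that $U_0(a)$ remains in $D_2$ and that the zero-speed shock and the second stationary jump exist along the whole interval, which you correctly flag. Note also that your endpoint identification $U_2(a_0)=\widetilde U_-^{*}$, $U_2(a_1)=\widetilde U_{-*}$ is the one consistent with the construction and with the signs asserted in conclusions 1) and 2); the paper's proof states the opposite identification, which appears to be a typo.
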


 \begin{proof}
From the assumption, we have $U_2(a_0)=\widetilde{U}_{-*}$, $U_2(a_1)=\widetilde{U}_-^{*}$.  Furthermore, we have
\begin{equation}\label{5.4}
\left\{\begin{array}{ll}
S_0(U_0,U_-):
\left\{\begin{array}{ll}
\displaystyle a_0\rho_-u_-=a\rho u,\\[8pt]
\displaystyle u_-^{\frac{\gamma}{\gamma+1}}(\rho_-^{\gamma}+\frac{\gamma}{2\gamma+1})=u_0^{\frac{\gamma}{\gamma+1}}(\rho_0^{\gamma}+\frac{\gamma}{2\gamma+1}),\\
\end{array}\right. \\[8pt]
S(U_1,U_0):
\left\{\begin{array}{ll}
\displaystyle \rho_0u_0=\rho_1u_1,\\[8pt]
\displaystyle u_0+\rho_0^{\gamma}=u_1+\rho_1^{\gamma},
\end{array}\right.\\
S_0(U_2,U_1):
\left\{\begin{array}{ll}
\displaystyle a\rho_1u_1=a_1\rho_2 u_2,\\[8pt]
\displaystyle u_1^{\frac{\gamma}{\gamma+1}}(\rho_1^{\gamma}+\frac{\gamma}{2\gamma+1})=u_2^{\frac{\gamma}{\gamma+1}}(\rho_2^{\gamma}+\frac{\gamma}{2\gamma+1}).
\end{array}\right.
\end{array}\right.
\end{equation}
From \eqref{5.4}, we have $\displaystyle \rho_2u_2=\frac{a_0\rho_-u_-}{a_1}$. So it is enough to judge the sign of $\frac{{\rm d}u_2}{{\rm d}a}$.  Differential the expressions of $S_0(U_0,U_-)$, we have
\begin{equation}\label{5.5}
\left\{\begin{array}{ll}
\displaystyle \frac{{\rm d}\rho_0}{{\rm d} a}=-\frac{\rho_0(u_0+\rho_0^{\gamma})}{a_0(u_0-\gamma\rho_0^{\gamma})},\\[12pt]
\displaystyle \frac{{\rm d}u_0}{{\rm d} a}=(\gamma+1)\frac{u_0\rho_0^{\gamma}}{a_0(u_0-\gamma\rho_0^{\gamma})}.\\
\end{array}\right.
\end{equation}
Differential the expressions of $S(U_1,U_0)$ and using \eqref{5.5}, we have
\begin{equation}\label{5.6}
\left\{\begin{array}{ll}
\displaystyle \frac{{\rm d}\rho_1}{{\rm d} a}=-\frac{\rho_0u_0+\rho_1\rho_0^{\gamma}}{a_0(u_1-\gamma\rho_1^{\gamma})},\\[12pt]
\displaystyle \frac{{\rm d}u_1}{{\rm d} a}=\frac{u_1\rho_0^{\gamma}+\gamma\rho_1^{\gamma-1}\rho_0u_0}{a_0(u_1-\gamma\rho_1^{\gamma})}.\\
\end{array}\right.
\end{equation}
Finally, from the expression of \eqref{5.6}, we have
\begin{equation}\label{5.7}
\displaystyle \frac{{\rm d}u_2}{{\rm d} a}=\frac{u_1}{a_0a_1}\frac{(\gamma+1)(a_0-a)\rho_1\rho_2^{\gamma-1}+a_1(\rho_1^{\gamma}-\rho_0^{\gamma})\left(\frac{u_2}{u_1}\right)^{\frac1{\gamma+1}}}{\gamma\rho_2^{\gamma}-u_2}.
\end{equation}
When $a$ changes monotonically, we have $\displaystyle \frac{{\rm d}u_2}{{\rm d}a}<0$ as $a_0>a_1$. When $a_0<a_1$, $\displaystyle \lim_{a\to a_0}{\frac{{\rm d}u_2}{{\rm d}a}}>0$, $\displaystyle \frac{{\rm d}u_2}{{\rm d}a}$ keeps its sign as $a$ is close to $a_0$. Thus we prove lemma B.

 \end{proof}

\bibliographystyle{amsplain}

 \end{document}